\tikzset{snake it/.style={decorate, decoration=snake}}
\theoremstyle{plain}
\newtheorem{thm}{Theorem}[section]
\newtheorem{cor}[thm]{Corollary}
\newtheorem{lem}[thm]{Lemma}
\newtheorem{prop}[thm]{Proposition}
\newtheorem{conj}[thm]{Conjecture}
\theoremstyle{definition}
\newtheorem{defn}[thm]{Definition}
\theoremstyle{remark}
\newtheorem{rmk}[thm]{Remark}
\newcommand{\BA}{{\mathbb{A}}}
\newcommand{\BC}{{\mathbb{C}}}
\newcommand{\BH}{{\mathbb{H}}}
\newcommand{\BP}{{\mathbb{P}}}
\newcommand{\BQ}{{\mathbb{Q}}}
\newcommand{\BZ}{{\mathbb{Z}}}
\newcommand{\CP}{{\mathcal P}}
\newcommand{\Fp}{{\mathfrak{p}}}
\DeclareFontFamily{OT1}{rsfs}{}
\DeclareFontShape{OT1}{rsfs}{n}{it}{<-> rsfs10}{}
\DeclareMathAlphabet{\curly}{OT1}{rsfs}{n}{it}
\title[Perverse filtrations for generalized Kummer varieties]{Perverse filtration for generalized Kummer varieties of fibered surfaces}
\author{Zili Zhang}
\address{School of Mathematical Sciences, Tongji University, Shanghai 200092, China}
\email{zhangzili@tongji.edu.cn}
\date{\today}
\begin{document}

\maketitle
\begin{abstract}
    Let $A\to C$ be a proper surjective morphism from a smooth connected quasi-projective commutative group scheme of dimension 2 to a smooth curve. The construction of generalized Kummer varieties gives a proper morphism $A^{[[n]]}\to C^{((n))}$. We show that the perverse filtration associated with this morphism is multiplicative.
\end{abstract}
\tableofcontents

\section{Introduction}
\subsection{Perverse filtrations}
Let $f:X\to Y$ be a proper morphism between smooth quasi-projective varieties. Let $^\Fp\tau_{\le k}$  be the perverse truncations of the bounded derived category of constructible sheaves $D^b_c(Y)$. Applying $^\Fp\tau_{\le k}$ to the pushforward $Rf_*\BQ_X$ gives an increasing filtration in $H^*(X,\BQ)$
\begin{equation}\label{-1}
P_0H^*(X,\BQ)\subset P_1H^*(X,\BQ)\subset\cdots\subset P_{k}H^*(X,\BQ)\subset \cdots\subset H^*(X,\BQ). 
\end{equation}
The filtration (\ref{-1}) is called \emph{the perverse filtration associated with the morphism $f$}. A perverse filtration is called \emph{multiplicative} if the cup product
\[
P_k H^d(X,\BQ)\cup P_l H^e(X,\BQ)\subset P_{k+l}H^{d+e}(X,\BQ), ~~~~k,l,d,e\ge 0.
\]
We say that a perverse filtration \emph{admits a strongly multiplicative splitting} if there is a direct sum decomposition 
\[
H^*(X)=\bigoplus_i G_iH^*(X,\BQ)
\]
splitting the perverse filtration in the sense that
\[
P_kH^*(X,\BQ)=\bigoplus_{i=0}^k G_i H^*(X,\BQ),
\]
such that the cup product
\[
G_iH^d(X,\BQ)\cup G_j H^e(X,\BQ)\subset G_{i+j}H^{d+e}(X,\BQ), ~~~~i,j,d,e\ge 0.
\]

\subsection{The P=W conjecture}
Let $C$ be a smooth projective curve. There are two moduli spaces associated with $C$ and the structure group $\textup{GL}(n,\BC)$. They are the Betti moduli $M_B$ which parametrizes all (twisted) $\textup{GL}(n,\BC)$-representations of the fundamental group $\pi_1(C)$ and the Dolbeault moduli $M_D$ which parametrizes all semi-stable Higgs bundles of degree $d$ and rank $n$. The Dolbeault moduli $M_D$ admits a natural proper morphism, called Hitchin map, to an affine space $\pi:M_D\to\BA$. Simpson proves that there is a \emph{canonical} diffeomorphism between $M_B$ and $M_D$, and hence there is a canonical isomorphism of cohomology groups 
\begin{equation}\label{-2}
    H^*(M_D,\BQ)= H^*(M_B,\BQ).
\end{equation}

In \cite{dCHM}, de Cataldo, Hausel, and Migliorini conjectured that under the identification (\ref{-2}), the perverse filtration associated with $\pi$ coincides with the mixed Hodge-theoretic weight filtration, \emph{i.e.}
\begin{conj}\cite{dCHM}\label{-3}
\[
P_k H^*(M_D,\BQ)=W_{2k} H^*(M_B,\BQ)=W_{2k+1} H^*(M_B,\BQ), ~~~~k\ge 0.
\]
\end{conj}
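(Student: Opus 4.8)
The plan is to reduce Conjecture~\ref{-3} to a statement about an explicit finite generating set of the cohomology ring and then to verify that statement separately on the two sides. The first ingredient is \emph{cohomological generation}: in the twisted (coprime) setting there is a universal bundle $\mathbb{E}$ on $M_D\times C$, and by Markman's theorem (with subsequent refinements) the ring $H^*(M_D,\BQ)$ --- equivalently $H^*(M_B,\BQ)$ under (\ref{-2}) --- is generated by the Künneth components of its Chern classes,
\[
c_k(\mathbb{E}) \;=\; \alpha_k\otimes 1 \;+\; \sum_{j} \gamma_k^{(j)}\otimes e_j \;+\; \beta_k\otimes[\mathrm{pt}] \;\in\; H^{2k}(M_D\times C,\BQ),
\]
where $\{e_j\}$ is a basis of $H^1(C,\BQ)$. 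Since the conjectural identification $P_\bullet=W_{2\bullet}$ is to hold at the level of subrings, it then suffices to (i) prove that $P_\bullet$ on $M_D$ and $W_{2\bullet}$ on $M_B$ are each \emph{multiplicative}, so that each filtration is determined by the levels it assigns to the generators, and (ii) check for every tautological generator that it lies in $P_k$ on $M_D$ exactly when it lies in $W_{2k}$ on $M_B$.

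For step (i), the weight side is automatic: cup product on $H^*(M_B,\BQ)$ is a morphism of mixed Hodge structures, so $W_{2k}\cup W_{2l}\subset W_{2k+2l}$; one also records here the \emph{curious} purity $W_{2k}H^*(M_B,\BQ)=W_{2k+1}H^*(M_B,\BQ)$, valid because $M_B$ is of Hodge--Tate type (Hausel--Rodriguez-Villegas). The Dolbeault side is the first substantive point: one must show the perverse filtration for the Hitchin map $\pi:M_D\to\BA$ is multiplicative --- ideally that it admits a strongly multiplicative splitting in the sense introduced above --- exploiting that $\pi$ is a Lagrangian fibration for the holomorphic symplectic form, relative Hard Lefschetz, and the decomposition theorem. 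This is precisely the flavor of statement the present paper establishes for the generalized Kummer fibrations $A^{[[n]]}\to C^{((n))}$, and one expects the same toolkit (the fibrewise abelian structure, the two commuting Lefschetz-type operators) to govern the Hitchin case. For step (ii), on the Betti side the weights of $\alpha_k,\beta_k,\gamma_k^{(j)}$ are computable from the known mixed Hodge structure of $M_B$; on the Dolbeault side one must place the corresponding classes in the predicted $P$-level and no lower. The even classes $\alpha_k$ are accessible by restriction to a single Hitchin fibre, while the $\gamma$-classes from $H^1(C)$ and the point-classes $\beta_k$ require understanding $P_\bullet$ over the \emph{entire} base $\BA$: here the relevant tools are the support theorem for the Hitchin system (Ngô, de Cataldo--Maulik--Saccà), $\chi$-independence, and the identification, over the open locus of integral spectral curves, of $P_\bullet$ with the filtration by fibrewise cohomological degree on the family of compactified Jacobians.

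I expect the main obstacle to be exactly this last computation on the Dolbeault side: pinning down the perverse level of the \emph{odd} tautological classes $\gamma_k^{(j)}$, and then propagating a computation valid over the generic (or elliptic) locus of $\BA$ to all of $\BA$, which forces one to control $P_\bullet$ along the positive-codimension strata where spectral curves degenerate --- this is where multiplicativity from step~(i) and the support theorem must be combined. A secondary difficulty is the non-coprime case, in which $M_D$ and $M_B$ are singular and no universal bundle exists globally; there one would first prove P=W in the twisted/coprime setting and then descend via a Hausel--Thaddeus-type comparison or an étale-quotient argument. Finally, it is worth noting an alternative route --- constructing a large algebra (of cohomological Hall / Yangian type) acting compatibly on $H^*(M_B,\BQ)$ and $H^*(M_D,\BQ)$ so that both filtrations become intrinsic to the module structure --- which sidesteps the generator-by-generator matching but replaces it with the construction and comparison of the two actions.
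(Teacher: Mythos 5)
The statement you are addressing is a \emph{conjecture}: the paper states it as Conjecture~\ref{-3}, attributes it to \cite{dCHM}, and offers no proof of it anywhere --- it serves only as motivation for studying multiplicativity of perverse filtrations, and the paper's actual theorems concern the unrelated-in-detail fibration $A^{[[n]]}\to C^{((n))}$. So there is no ``paper's own proof'' to compare against, and your text should be judged on its own terms as a proof attempt.

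On those terms it is not a proof but a research program, and every substantive step is deferred rather than carried out. Your reduction in the coprime case --- tautological generation of $H^*(M_D,\BQ)$ plus multiplicativity of both filtrations plus a generator-by-generator match --- is essentially the equivalence the paper already cites as \cite[Proposition 1.6]{dCMS}, so the reduction itself is not new content. The two steps that constitute the actual mathematical difficulty are exactly the ones you leave open: (i) multiplicativity of the perverse filtration for the Hitchin map $\pi:M_D\to\BA$, which you assert ``one must show'' and for which the present paper's toolkit does not transfer (the Göttsche--Soergel decomposition, the classification of fibered two-dimensional group schemes, and the Lehn--Sorger/Britze cup-product formulas all rely on the fibers being built from symmetric products of a fixed surface, whereas Hitchin fibers are compactified Jacobians of varying, degenerating spectral curves); and (ii) the computation of the perverse level of the odd tautological classes $\gamma_k^{(j)}$ over all of $\BA$, which you yourself identify as ``the main obstacle.'' A proposal whose main obstacles coincide with its unexecuted steps has not closed any gap. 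If you want to turn this into something checkable, you would need to either carry out (i) and (ii) following \cite{dCMS} in low rank, or restrict to a setting --- such as the five families of Hitchin systems treated in \cite{Z}, or the generalized Kummer fibrations of this paper --- where the fibration genuinely has the symmetric-product structure your argument implicitly assumes.
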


Conjecture \ref{-3} is referred to as the $P=W$ conjecture. It is proved by Maulik-Shen \cite{MS} and Hausel-Mellit-Minets-Schiffmann \cite{HMMS} independently. The properties of the perverse filtration and weight filtration are quite different. Since the Hodge-theoretic weight filtration for arbitrary varieties is always multiplicative, the $P=W$ conjecture suggests that the perverse filtration associated with Hitchin maps are multiplicative. In fact, by studying the ring generators of $H^*(M_B,\BQ)= H^*(M_D,\BQ)$ described by Markman \cite{Markman}, de Cataldo, Maulik, and Shen calculate the perversity of the ring generators, and show that the $P=W$ conjecture is equivalent to the multiplicativity of the perverse filtration associated with the Hitchin map $\pi$; see \cite[Proposition 1.6]{dCMS}. However, the multiplicativity of perverse filtration does not hold for general morphisms; see \cite[Example 1.5]{Z}. 

Since the Dolbeault moduli spaces are hyperK\"ahler manifolds and the Hitchin maps are Lagrangian fibrations, the $P=W$ conjecture suggests that the multiplicativity of perverse filtrations holds for Lagrangian fibrations. Shen and Yin verified multiplicativity for compact hyperK\"ahler varieties and Lagrangian fibrations in \cite{SY}. For non-compact cases,
the two proofs \cite{MS} and \cite{HMMS} of the $P=W$ prove in particular the multiplicativity of $\textup{GL}(n)$-Hitchin fibrations. The multiplicativity of perverse filtrations for Hilbert schemes of fibered surfaces is studied in \cite{SZ,Z,Z1}.  In this article, we give a concrete description of the perverse filtrations associated with generalized Kummer varieties of quasi-projective fibered surfaces, and show that the perverse filtration is multiplicative and admits a natural strongly multiplicative splitting. One interesting aspect of this result is that the cohomology ring of generalized Kummer variety is \emph{not} generated by tautological classes. Thus this result may hint at the multiplicativity for other non-tautologically generated spaces, such as the moduli of $\textup{SL}(n,\BC)$-Higgs bundles.

\subsection{Generalized Kummer varieties for fibered surfaces}
Let $A$ be a smooth connected (not necessarily compact) commutative group scheme of dimension 2. Denote $A^{[n]}$ the Hilbert scheme of $n$ points on $A$. Let $A^{(n)}$ be the $n$-fold symmetric product of $A$. There is natural summation map $+:A^{(n)}\to A$. The kernel of the composition of the Hilbert-Chow morphism and the summation
\[
A^{[n]}\to A^{(n)}\to A
\]
is called the generalized Kummer variety, denoted as $A^{[[n]]}$. The generalized Kummer variety\footnote{Throughout this paper, we use the term ``generalized Kummer variety $A^{[[n]]}$'' without assuming $A$ to be compact. } $A^{[[n]]}$ is a smooth variety of dimension $2n-2$. When $A$ admits a proper surjective morphism to a curve $C$, there is a natural proper fibration constructed as follows. Consider the composition
\begin{equation}\label{-4}
A^{[[n]]}\hookrightarrow A^{[n]}\to A^{(n)} \to C^{(n)}.
\end{equation}
Let $C^{((n))}$ be the image of (\ref{-4}). Then $C^{((n))}$ is a variety of dimension $n-1$ (Proposition \ref{dim}), and the morphism $A^{[[n]]}\to C^{((n))}$ is a proper morphism. Our main result is

\begin{thm}[Theorem \ref{main}]
Let $f:A\to C$ be a proper surjective morphism from a connected quasi-projective
commutative group scheme $A$ of dimension 2 to a quasi-projective curve $C$. Then the perverse filtration associated with the induced morphism $h':A^{[[n]]}\to C^{((n))}$ is multiplicative.
\end{thm}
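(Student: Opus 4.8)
The plan is to reduce the multiplicativity statement for $h':A^{[[n]]}\to C^{((n))}$ to an already-understood statement about the Hilbert scheme morphism $h:A^{[n]}\to C^{(n)}$, using the well-known product structure on generalized Kummer varieties. Recall that, after a finite isogeny, there is a decomposition (in the projective/compact setting this is due to Göttsche--Soergel, Britze, Nieper-Wißkirchen, and in the fibered setting one expects an analogous statement) expressing $A^{[n]}$ up to a finite \'etale cover by $A\times A^{[[n]]}$, and correspondingly $C^{(n)}$ relates to $C\times C^{((n))}$ via the summation map $C^{(n)}\to C$ (here using that $A\to C$ being a group homomorphism, or at least compatible with a group structure on the generic fiber). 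The first step is therefore to make precise the Cartesian/product diagram
\begin{equation}\label{productdiag}
\begin{tikzcd}
A\times A^{[[n]]} \arrow[r] \arrow[d] & A^{[n]} \arrow[d, "h"] \\
C\times C^{((n))} \arrow[r] & C^{(n)}
\end{tikzcd}
\end{equation}
in which the horizontal maps are finite \'etale (of degree $n^4$ and $n$ respectively, or compatible degrees), and to check that the perverse filtration is preserved: pullback along a finite \'etale map is strict for perverse filtrations and is a ring homomorphism, so multiplicativity descends/ascends along such maps, and the perverse filtration of a product morphism is the convolution (tensor product) of the factors' perverse filtrations.

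Second, I would invoke the known multiplicativity for the Hilbert scheme case. By the references \cite{SZ,Z,Z1}, the perverse filtration associated with $h:A^{[n]}\to C^{(n)}$ is multiplicative (indeed these papers treat Hilbert schemes of arbitrary fibered quasi-projective surfaces, and a commutative group scheme $A\to C$ is a special case). Combined with the fact that the perverse filtration for the identity-type morphism $A\to C$ is trivially multiplicative, the K\"unneth decomposition gives that the perverse filtration of the product morphism $A\times A^{[[n]]}\to C\times C^{((n))}$ is multiplicative. The content of the third step is then the descent: pull back the class ring structure along the finite \'etale cover $A\times A^{[[n]]}\to A^{[n]}$ and the compatible cover on the base, use that a finite \'etale morphism $\phi$ satisfies $\phi^*\circ\phi_* = \mathrm{(degree)}\cdot(\text{averaging})$ so $\phi^*$ is injective on rational cohomology, and conclude that the perverse filtration on $H^*(A^{[[n]]},\BQ)$ — which is the $\phi^*$-pullback of a direct summand — inherits multiplicativity from the product. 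One must check compatibility of $\phi^*$ with the two perverse filtrations, which follows because finite maps are both affine and proper, hence exact for perverse truncation (\emph{cf.} the standard decomposition theorem machinery).

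The main obstacle I anticipate is establishing diagram~\eqref{productdiag} rigorously in the \emph{quasi-projective, non-compact} setting and, crucially, verifying that the \'etale cover $A\times A^{[[n]]}\to A^{[n]}$ is compatible with the two Lagrangian fibrations, i.e. that it fits over a corresponding finite map $C\times C^{((n))}\to C^{(n)}$. The cover on the Hilbert scheme side is built from the translation action of $A$ and the ``sum of a length-$n$ subscheme'' map $A^{[n]}\to A$; one needs the analogue $C^{(n)}\to C$ (the symmetric-product summation, which makes sense since $C$ carries a group structure induced from $A$ on an open dense locus, or one works with $C^{((n))}$ as defined by the image in \eqref{-4} and checks directly that $C^{(n)}\cong C\times C^{((n))}$ after base change). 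Care is needed because $C^{((n))}$ is a priori only the \emph{image} of \eqref{-4} and may be singular; one should use Proposition~\ref{dim} and the explicit description of $C^{((n))}$ from earlier in the paper. Once the square is in place and its horizontal arrows are shown finite \'etale, the rest is formal. An alternative, if the clean product decomposition is unavailable, is to redo the combinatorial/geometric analysis of \cite{SZ,Z,Z1} directly on $A^{[[n]]}$, tracking the generalized-Kummer constraint through the nested Hilbert scheme stratification and the perverse Leray spectral sequence; this is more laborious but self-contained, and the multiplicativity would again follow from exhibiting an explicit strongly multiplicative splitting indexed by the relevant partition/weight data.
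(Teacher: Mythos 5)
Your overall instinct---transfer the problem to the Hilbert scheme $h:A^{[n]}\to C^{(n)}$ via the translation covering $\phi:A\times A^{[[n]]}\to A^{[n]}$, $(a,Z)\mapsto Z+a$---is in the same spirit as the paper, which indeed works on $A^{[[n]]}\times A$ and uses \'etale coverings $\Sigma_\sigma$ stratum by stratum (Proposition \ref{aaa}, Theorem \ref{thm}) to identify the perverse filtration. But your descent step has a genuine gap. The covering $\phi$ is finite \'etale Galois with group $A[n]$ (of order $n^4$, and note the base cover has degree $|B[n]|=n^2$, not $n$), so $\phi^*$ identifies $H^*(A^{[n]},\BQ)$ with the \emph{$A[n]$-invariant} part of $H^*(A\times A^{[[n]]},\BQ)$ only. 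Since $A[n]$ acts trivially on $H^*(A,\BQ)$ but \emph{non-trivially} on $H^*(A^{[[n]]},\BQ)$ (already for $n=2$ the $2$-torsion translations permute the $16$ exceptional classes of the Kummer surface), the image of $\phi^*$ is the proper subring $H^*(A,\BQ)\otimes H^*(A^{[[n]]},\BQ)^{A[n]}$, which does \emph{not} contain $1\otimes H^*(A^{[[n]]},\BQ)$. In the paper's notation these missing classes are exactly the $\alpha_{\nu,\sigma}$ with $\sigma\neq 0$ (Definition \ref{dfn}): the multiplicity $|A[\gcd(\nu)]|$ in the decomposition of Theorem \ref{thm} records the non-trivial isotypic components. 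Multiplicativity of the perverse filtration on the invariant subring says nothing about cup products of these non-invariant classes, so the claimed conclusion for $H^*(A^{[[n]]},\BQ)$ does not follow; and deducing multiplicativity for the product morphism $A\times A^{[[n]]}\to C\times C^{((n))}$ from K\"unneth would presuppose the very statement being proved.

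The missing ingredient is precisely Britze's cup-product formula \cite[Theorem 32]{Britze}: writing $\alpha_\nu^{[n]}\cdot\beta_\mu^{[n]}=\sum_\lambda\gamma_\lambda^{[n]}$ in $H^*(A^{[n]},\BQ)$, one has $\alpha_{\nu,\sigma}\cdot\beta_{\mu,\tau}=\sum_\lambda\gamma_{\lambda,\sigma\tau}$ in $H^*(A^{[[n]]}\times A,\BQ)$. This is a non-formal computation of the ring structure on the full (non-invariant) cohomology, and it is what lets the paper combine the perversity identity $\Fp^{h'\times f}(\alpha_{\nu,\sigma})=\Fp^{h}(\alpha_\nu^{[n]})$ (equation (\ref{1241}), which does follow from the covering argument you have in mind) with the known multiplicativity for $h$ (Theorem \ref{multi}) to conclude. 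Without this formula, or an equivalent control of products of classes outside $\phi^*H^*(A^{[n]},\BQ)$, your argument establishes multiplicativity only on a proper subring. Your fallback suggestion of redoing the stratified analysis directly on $A^{[[n]]}$ is essentially what such a substitute would have to amount to, but as stated it is not a proof.
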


There are three main ingredients: (1) the classification of fibered group surfaces (Proposition \ref{str}), (2) the cup product formula (Theorem \ref{cup}), and (3) the description of the perverse filtration associated with $A^{[[n]]}\to C^{((n))}$ (Proposition \ref{aaa}). 

This article is organized as follows. In Section 2, we recall the definition and properties of perverse filtrations on the cohomology groups and compactly supported cohomology associated with general proper morphisms between smooth varieties. In Section 3, we recall the properties of Hilbert schemes of points of fibered surfaces. In Section 4, we first classify quasi-projective smooth commutative group schemes of dimension 2 properly fibered  over smooth curves. Then we calculate the perverse filtrations of the generalized Kummer varieties of fibered surfaces, and prove that they are multiplicative.

\subsection{Acknowledgements} I am grateful to Michel Brion for help on non-compact 2D algebraic groups, and for pointing out references. I thank Lie Fu and Shizhang Li for helpful discussions. I also thank the anonymous referees' careful work and helpful suggestions. I am partially supported by the Fundamental Research Funds for the Central Universities.

\section{Perverse filtrations}
Let $f:X\to Y$ be a proper morphism between smooth quasi-projective varieties. The perverse $t$-structure on $D^b_{c}(Y)$, the bounded derived category of constructible sheaves on $Y$, gives a morphism
\[
^\Fp\tau_{\le k}\left(Rf_*\BQ_X[\dim X-r(f)]\right)\to Rf_*\BQ_X[\dim X-r(f)],
\]
where 
\[
r(f)=\dim X\times_Y X-\dim X
\]
is the defect of semismallness of $f$. Applying hypercohomology yields a morphism in cohomology groups
\begin{equation} \label{01}
\begin{split}
    \BH^{d-\dim X+r(f)}\left({^\Fp\tau_{\le k}}(Rf_*\BQ_X[\dim X-r(f)])\right)&\to \\
    \BH^{d-\dim X+r(f)}(Rf_*\BQ_X[\dim X-&r(f)])=H^d(X,\BQ).
\end{split}
\end{equation}
We define the subspace $P_k^fH^d(X,\BQ)$ to be the image of (\ref{01})\footnote{The shift $[\dim X-r(f)]$ is to ensure that the perverse filtration starts at 0.}. We will omit the reference to the morphism $f$ when no confusion arises. Then we have an increasing filtration
\[
 P_0H^d(X,\BQ)\subset P_1H^d(X,\BQ)\subset\cdots \subset H^d(X,\BQ), ~~~~d\ge 0
\]
called the perverse filtration associated with morphism $f$. It is concentrated at the interval $[0,2r]$, \emph{i.e.} $P_{-1}H^d(X,\BQ)=0$ and $P_{2r}H^d(X,\BQ)=H^d(X,\BQ)$ for all $d$. For a nonzero class $\alpha\in H^*(X,\BQ)$, we denote $\Fp^f(\alpha)=k$ if $\alpha\in P_kH^*(X,\BQ)$ and $\alpha\not\in P_{k-1}H^*(X,\BQ)$. We say that the perverse filtration is multiplicative if
\[
P_kH^d(X,\BQ)\cup P_lH^e(X,\BQ)\subset P_{k+l}H^{d+e}(X,\BQ), ~~~~k,l,d,e\ge 0.
\]

Suppose the perverse filtration $P_\bullet H^*(X,\BQ)$ admits a splitting
\[
P_kH^*(X,\BQ)=\bigoplus_{i=0}^k G_iH^*(X,\BQ)
\]
satisfying 
\begin{equation}\label{001}
G_iH^d(X,\BQ)\cup G_jH^e(X,\BQ)\subset G_{i+j}H^{d+e}(X,\BQ), ~~~~i,j,d,e\ge0,
\end{equation}
then we say that the perverse filtration $P_\bullet H^*(X,\BQ)$ admits a \emph{strongly multiplicative splitting}, or the morphism $f$ admits a strongly multiplicative perverse decomposition. It follows from the definition that if $f$ admits a strongly multiplicative perverse decomposition, then the perverse filtration associated with $f$ is multiplicative, but not vice versa. A class $\beta\in G_iH^d(X,\BQ)$ for some $i$ and $d$ is called \emph{pure of perversity $i$ with respect to the splitting $G$}.

We say that a basis $\{\beta_i\}$ of $H^*(X,\BQ)$ is adapted to the perverse filtration $P_\bullet H^*(X,\BQ)$ if 
\[
P_kH^*(X,\BQ)=\left\langle\beta_i\mid\beta_i\in P_kH^*(X,\BQ)\right\rangle.
\]

Similarly, the perverse filtration can also be defined on the compactly supported cohomology by taking $R\Gamma_c$ in (\ref{01}). Then the natural transformation of functors $R\Gamma_c\to R\Gamma$ implies that the forgetful map $\iota:H^*_c(X,\BQ)\to H^*(X,\BQ)$ preserves the perverse filtration, \emph{i.e.} $\iota P_kH^*_c(X,\BQ)\subset P_kH^*(X,\BQ)$.

\begin{prop}\label{dualbasis}
Let $f:X\to Y$ be a proper morphism between smooth quasi-projective varieties. Then there exists a basis $\{\beta_i\}$ of $H^*(X,\BQ)$ adapted to $P_\bullet H^*(X,\BQ)$ and a basis $\{\beta^i\}$ adapted to $P_\bullet H^*_c(X,\BQ)$, such that
\begin{enumerate}
    \item $\{\beta_i\}$ and $\{\beta^i\}$ are dual with respect to the Poincar\'e pairing, i.e. 
    \[
    \langle\beta_i,\beta^j\rangle_X=\delta_{ij}.
    \]
    \item $\Fp(\beta_i)+\Fp(\beta^i)=2r(f)$.
\end{enumerate}
In particular, if $\Fp(\alpha)+\Fp(\beta)< 2r(f)$ for some $\alpha\in H^*(X,\BQ)$ and $\beta\in H^*_c(X,\BQ)$, then $\langle\alpha,\beta\rangle_X=0$.
\end{prop}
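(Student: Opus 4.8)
The plan is to establish the existence of the two adapted bases with the stated duality by working one perverse degree at a time and using the relative hard Lefschetz theorem. First I would recall the decomposition theorem for the proper morphism $f:X\to Y$: writing $K=Rf_*\BQ_X[\dim X-r(f)]$, we have a (non-canonical) splitting $K\cong\bigoplus_{j=-r(f)}^{r(f)} {}^\Fp\! H^j(K)[-j]$ into shifted perverse sheaves. Taking hypercohomology turns this into a splitting of $H^*(X,\BQ)$ indexed by perverse degree, and $P_kH^d(X,\BQ)$ is exactly the sum of the pieces of perverse degree $\le k$. The analogous statement for $H^*_c$ follows from the same splitting by taking compactly supported hypercohomology; note that Verdier duality on $Y$ exchanges ${}^\Fp\! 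H^j(K)$ with ${}^\Fp\! H^{-j}(\mathbb{D}K)$, and since $X$ is smooth $\mathbb{D}K\cong K$ up to the appropriate Tate twist and shift by $2r(f)$, which is the source of the relation $\Fp(\beta_i)+\Fp(\beta^i)=2r(f)$.

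Next I would construct the bases. Choose any graded basis of $H^*(X,\BQ)$ subordinate to the perverse splitting above — i.e.\ each basis vector lies in a single graded piece, hence has a well-defined perversity; call this $\{\beta_i\}$, so it is adapted to $P_\bullet H^*(X,\BQ)$. The Poincaré pairing $H^*_c(X,\BQ)\times H^*(X,\BQ)\to\BQ$ is perfect, so it yields a dual basis $\{\beta^i\}$ of $H^*_c(X,\BQ)$ characterized by $\langle\beta_i,\beta^j\rangle_X=\delta_{ij}$. The point to verify is that this dual basis is itself adapted to $P_\bullet H^*_c(X,\BQ)$ and that each $\beta^i$ is pure of perversity $2r(f)-\Fp(\beta_i)$. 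For this I would use that Verdier duality identifies the perverse filtration on $H^*_c(X,\BQ)$ with the annihilator filtration of the perverse filtration on $H^*(X,\BQ)$ under the Poincaré pairing: concretely, an element of $H^*_c$ lies in $P_k H^*_c(X,\BQ)$ iff it pairs to zero with $P_{2r(f)-k-1}H^*(X,\BQ)$. Granting this, the dual of a basis vector of perversity $p$ pairs nontrivially only against vectors of perversity $p$ and trivially against everything of perversity $<p$ in the complementary sense, forcing $\beta^i$ to have perversity exactly $2r(f)-p$; and a dual basis of a graded basis is automatically graded for the annihilator filtration, hence adapted.

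The final clause is then immediate: if $\alpha\in P_aH^*_c(X,\BQ)$ and $\beta\in P_bH^*_c(X,\BQ)$ with $a+b<2r(f)$, expand $\beta$ in the basis $\{\beta^i\}$; only terms with $\Fp(\beta^i)\le b$ appear, and $\alpha$ pairs to zero (via $\iota$ into $H^*(X,\BQ)$) against any $\beta^i$ with $\Fp(\beta^i)\le b\le 2r(f)-a-1$ because $\iota\alpha\in P_aH^*(X,\BQ)$ and the pairing of $P_a H^*$ with $P_{2r(f)-a-1}H^*$-dual classes vanishes by the duality relation just established — so $\langle\alpha,\beta\rangle_X=0$. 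I expect the main obstacle to be the bookkeeping in the second step: one must check carefully that Verdier self-duality of $K$ really does send the perverse filtration on hypercohomology to the annihilator filtration on compactly supported hypercohomology with the exact index shift $2r(f)$ (rather than an off-by-the-shift error), and that the chosen splittings on the two sides can be taken compatibly — equivalently, that the dual-basis construction does not secretly require the two a priori independent splittings of $K$ (one for $R\Gamma$, one for $R\Gamma_c$) to agree. This is handled by fixing a single splitting of $K$ and pushing it through both functors, which is the standard device and makes the compatibility automatic.
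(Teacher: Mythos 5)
Your argument is correct and follows essentially the same route as the paper: the paper's proof simply fixes a (non-canonical) splitting $Rf_*\BQ_X[\dim X-r(f)]\cong\bigoplus_{j}\CP_j[-j]$ and cites \cite[Proposition 4.1, Remark 2.9]{Z1}, which is exactly the content you reconstruct — self-duality of the pushforward via $Rf_!=Rf_*$ and smoothness of $X$, the identification of $P_kH^*_c$ with the annihilator of $P_{2r(f)-k-1}H^*$, and the resulting dual adapted bases with complementary perversities. The one bookkeeping point you rightly flag (fixing a single splitting and tracking the shift by $2r(f)$) is precisely what those cited results handle, and your use of the forgetful map $\iota$ to read the final clause (where the paper's statement has both classes in $H^*_c$, presumably a typo for one of them lying in $H^*$) is the intended interpretation.
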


\begin{proof}
Fix a (non-canonical) decomposition 
\[
Rf_*\BQ[\dim X-r(f)]\cong \CP_0\bigoplus\cdots\bigoplus \CP_{2r(f)}[-2r(f)],
\]
where $\CP_i$ are perverse sheaves. Then \cite[Proposition 4.1]{Z1} and \cite[Remark 2.9]{Z1} produce the desired bases $\{\beta_i\}$ and $\{\beta^i\}$.
\end{proof}

The relation of push-forward and pull-back of perverse filtrations is described as follows.
\begin{prop} \label{pushpull}
Let $f:X\to A$ and $g:Y\to B$ be two proper morphism between smooth quasi-projective varieties. Let $h:X\to Y$ be a proper morphism. Then for any integer $m$, the following are equivalent.
\begin{enumerate}
    \item $\Fp^f(h^*\beta)\le \Fp^g(\beta)+m$ for all $\beta\in H^*(Y,\BQ)$.
    \item $\Fp^g(h_*\alpha)\le \Fp^f(\alpha)+m+2r(g)-2r(f)$ for all $\alpha\in H^*_c(X,\BQ)$.
\end{enumerate}
\end{prop}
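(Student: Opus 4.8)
The plan is to prove the equivalence of (1) and (2) by a duality argument using the Poincaré pairing, together with the compatibility of the perverse filtration with pushforward and pullback along $h$ via adjunction.

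First I would set up the notation: fix splittings $Rf_*\BQ_X[\dim X - r(f)] \cong \bigoplus_i \CP_i[-i]$ and $Rg_*\BQ_Y[\dim Y - r(g)] \cong \bigoplus_j \CQ_j[-j]$ into perverse sheaves, and recall from Proposition \ref{dualbasis} that we have dual bases $\{\beta_i\}$ of $H^*(Y,\BQ)$ and $\{\beta^i\}$ of $H^*_c(Y,\BQ)$ with $\langle \beta_i, \beta^j\rangle_Y = \delta_{ij}$ and $\Fp^g(\beta_i) + \Fp^g(\beta^i) = 2r(g)$, and similarly dual bases $\{\gamma_a\}$, $\{\gamma^a\}$ for $X$. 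The key point is the projection formula / adjunction identity: for $\alpha \in H^*_c(X,\BQ)$ and $\beta \in H^*(Y,\BQ)$,
\[
\langle h_*\alpha, \beta\rangle_Y = \langle \alpha, h^*\beta\rangle_X,
\]
valid because $h$ is proper so $h_*$ on compactly supported cohomology is the pushforward dual to $h^*$.

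Next I would prove (1) $\Rightarrow$ (2). Suppose $\Fp^f(h^*\beta) \le \Fp^g(\beta) + m$ for all $\beta \in H^*(Y,\BQ)$. Take $\alpha \in H^*_c(X,\BQ)$ with $\Fp^f(\alpha) = k$; I want to show $h_*\alpha \in P_{k+m+2r(g)-2r(f)}^g H^*_c(Y,\BQ)$. Write $h_*\alpha = \sum_i c_i \beta^i$ in the basis $\{\beta^i\}$ of $H^*_c(Y,\BQ)$ adapted to $P^g_\bullet$. Then $c_i = \langle h_*\alpha, \beta_i\rangle_Y = \langle \alpha, h^*\beta_i\rangle_X$. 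If $\beta^i \notin P^g_{k+m+2r(g)-2r(f)} H^*_c(Y,\BQ)$, then $\Fp^g(\beta^i) > k + m + 2r(g) - 2r(f)$, so by the duality relation $\Fp^g(\beta_i) = 2r(g) - \Fp^g(\beta^i) < 2r(f) - k - m$, hence $\Fp^f(h^*\beta_i) \le \Fp^g(\beta_i) + m < 2r(f) - k$. Combined with $\Fp^f(\alpha) = k$, the sum $\Fp^f(\alpha) + \Fp^f(h^*\beta_i) < 2r(f)$, so by the last assertion of Proposition \ref{dualbasis} (applied on $X$, noting $h^*\beta_i$ lies in ordinary cohomology but one pairs a compactly supported class with it — here I should be careful and pair $\alpha \in H^*_c$ against $h^*\beta_i \in H^*$, which is exactly the setting of the Poincaré pairing $H^*_c \times H^* \to \BQ$), we get $c_i = 0$. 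Hence $h_*\alpha \in P^g_{k+m+2r(g)-2r(f)}$, which is (2). The reverse implication (2) $\Rightarrow$ (1) is symmetric: given (2), take $\beta \in H^*(Y,\BQ)$, expand $h^*\beta$ in the basis $\{\gamma_a\}$ adapted to $P^f_\bullet H^*(X,\BQ)$, use $\langle h^*\beta, \gamma^a\rangle_X = \langle \beta, h_*\gamma^a\rangle_Y$, and run the same numerology with the roles of $f$ and $g$ interchanged.

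The main obstacle I expect is bookkeeping the four shifts and the distinction between ordinary and compactly supported cohomology: one must pair a compactly supported class on one space against an ordinary class pulled back from the other, and check that the dual-basis vanishing statement of Proposition \ref{dualbasis} applies in precisely that mixed pairing. A secondary subtlety is that $\Fp^f(\alpha)$ for $\alpha \in H^*_c$ and $\Fp^f$ for classes in $H^*$ refer to the two (a priori different) perverse filtrations, so I would state explicitly at the outset that Proposition \ref{dualbasis} gives bases adapted to \emph{both}, making the pairing argument go through cleanly. Once the pairing identity and the index arithmetic are pinned down, the proof is a short formal manipulation.
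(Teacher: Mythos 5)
Your proposal is correct and follows essentially the same route as the paper's proof: the adjunction identity $\langle h_*\alpha,\beta\rangle_Y=\langle\alpha,h^*\beta\rangle_X$, the dual bases of Proposition \ref{dualbasis} with $\Fp(\beta_i)+\Fp(\beta^i)=2r(g)$, and the vanishing of the mixed pairing when perversities sum to less than $2r$. The only cosmetic difference is that you compute the coefficients $c_i$ directly while the paper phrases the same index arithmetic as a proof by contradiction.
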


\begin{proof}
By adjunction, for $\alpha\in H^*_c(X,\BQ)$, $\beta\in H^*(Y,\BQ)$, we have
\[
\langle h_*\alpha,\beta\rangle_Y=\langle \alpha,h^*\beta\rangle_X,
\]
where $\langle-,-\rangle$ denotes the Poincar\'e paring.\\
We prove $(1)\Rightarrow(2)$. Suppose $\alpha\in P_kH^*_c(X,\BQ)$. Then for any $\beta\in P_{2r(f)-k-m-1}H^*(Y,\BQ)$, $\Fp(h^*\beta)\le 2r(f)-k-1$. By Proposition \ref{dualbasis}, $\langle \alpha,h^*\beta\rangle_X=0$, and hence $\langle h_*\alpha,\beta\rangle_Y=0$. Let $\{\beta^i\}$ be the basis obtained in Proposition \ref{dualbasis} for the morphism $g:Y\to B$. Suppose $\Fp^g(f_*\alpha)>k+m+2r(g)-2r(f)$, then at least one $\beta^i$ with $\Fp^g(\beta^i)>k+m+2r(g)-2r(f)$ would have nonzero coefficient in the expansion of $f_*\alpha$. Then by Proposition \ref{dualbasis}, $\Fp^g(\beta_i)<2r(f)-k-m$ and $\beta_i$ would have nonzero paring with $f_*\alpha$, contradiction. The proof of the reverse direction is similar.
\end{proof}

\begin{prop}\cite[Proposition 2.1]{Z}\label{Kunneth}
Let $f_1:X_1\to Y_1$ and $f_2:X_2\to Y_2$ be proper morphisms between smooth quasi-projective varieties. Then 
\[
\Fp^{f_1\times f_2}(\alpha_1\boxtimes \alpha_2)=\Fp^{f_1}(\alpha_1)+\Fp^{f_2}(\alpha_2).
\]
\end{prop}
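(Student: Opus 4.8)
The plan is to trace how the perverse $t$-structure interacts with external (Künneth) products of complexes. Write $f=f_1\times f_2\colon X_1\times X_2\to Y_1\times Y_2$ and set $A_i=Rf_{i*}\BQ_{X_i}[\dim X_i-r(f_i)]$. First I would record that $r(f)=r(f_1)+r(f_2)$, which is immediate from the identification $(X_1\times X_2)\times_{Y_1\times Y_2}(X_1\times X_2)=(X_1\times_{Y_1}X_1)\times(X_2\times_{Y_2}X_2)$ together with additivity of dimension for products of varieties. Consequently the base-change/Künneth isomorphism $Rf_*\BQ_{X_1\times X_2}\cong Rf_{1*}\BQ_{X_1}\boxtimes^{L}Rf_{2*}\BQ_{X_2}$ becomes, after the normalizing shifts, $Rf_*\BQ_{X_1\times X_2}[\dim(X_1\times X_2)-r(f)]\cong A_1\boxtimes^{L}A_2$, since the shifts $\dim X_1+\dim X_2-r(f_1)-r(f_2)$ agree.

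Next I would invoke the decomposition theorem: as each $X_i$ is smooth and $f_i$ is proper, $A_i$ splits non-canonically as $\bigoplus_{k=0}^{2r(f_i)}\CP^i_k[-k]$ with the $\CP^i_k$ perverse on $Y_i$. The one input beyond formal manipulation is that the external product of perverse sheaves is perverse, equivalently ${}^{\Fp}\CH^n(K\boxtimes^{L}L)=\bigoplus_{a+b=n}{}^{\Fp}\CH^a(K)\boxtimes^{L}{}^{\Fp}\CH^b(L)$; with $\BQ$-coefficients the support estimate holds because $\BQ$ is a field (so $\CH^n(K\boxtimes^{L}L)=\bigoplus_{a+b=n}\CH^a(K)\boxtimes\CH^b(L)$ and dimensions of supports add), and the cosupport estimate follows by Verdier duality, which commutes with $\boxtimes^{L}$. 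Granting this, $A_1\boxtimes^{L}A_2\cong\bigoplus_{k,l}(\CP^1_k\boxtimes^{L}\CP^2_l)[-(k+l)]$ is again a splitting into shifted perverse sheaves, so ${}^{\Fp}\tau_{\le m}(A_1\boxtimes^{L}A_2)=\bigoplus_{k+l\le m}(\CP^1_k\boxtimes^{L}\CP^2_l)[-(k+l)]$ and the truncation map to $A_1\boxtimes^{L}A_2$ is the inclusion of a direct summand. Taking hypercohomology and using the Künneth isomorphism $\BH^*(\CP^1_k\boxtimes^{L}\CP^2_l)=\BH^*(\CP^1_k)\otimes\BH^*(\CP^2_l)$, I get subspaces $G^i_a\subset H^*(X_i,\BQ)$, namely the images under hypercohomology of the direct summands $\CP^i_a[-a]$ of $A_i$, with $\bigoplus_{a\le k}G^i_a=P_kH^*(X_i,\BQ)$ and, under the Künneth isomorphism $H^*(X_1,\BQ)\otimes H^*(X_2,\BQ)\cong H^*(X_1\times X_2,\BQ)$, the identification $P_mH^*(X_1\times X_2,\BQ)=\bigoplus_{a+b\le m}G^1_a\otimes G^2_b$.

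Finally I would finish with a leading-term computation. Fix $\alpha_i\in H^*(X_i,\BQ)$ with $\Fp^{f_i}(\alpha_i)=k_i$ and expand $\alpha_i=\sum_{a\le k_i}\alpha_{i,a}$, where $\alpha_{i,a}\in G^i_a$ and $\alpha_{i,k_i}\neq0$. Then $\alpha_1\boxtimes\alpha_2=\sum_{a\le k_1,\,b\le k_2}\alpha_{1,a}\boxtimes\alpha_{2,b}$ with every summand in $G^1_a\otimes G^2_b\subset P_{a+b}H^*(X_1\times X_2,\BQ)$, so $\Fp^f(\alpha_1\boxtimes\alpha_2)\le k_1+k_2$. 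For the opposite inequality, the constraints $a\le k_1$ and $b\le k_2$ force every summand with $(a,b)\neq(k_1,k_2)$ to have $a+b\le k_1+k_2-1$, hence to lie in $P_{k_1+k_2-1}H^*(X_1\times X_2,\BQ)$, whereas $\alpha_{1,k_1}\boxtimes\alpha_{2,k_2}$ is a nonzero element of the complementary summand $G^1_{k_1}\otimes G^2_{k_2}$ (nonzero because the Künneth map is injective); therefore $\alpha_1\boxtimes\alpha_2\notin P_{k_1+k_2-1}H^*(X_1\times X_2,\BQ)$ and $\Fp^f(\alpha_1\boxtimes\alpha_2)=k_1+k_2$. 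The step I expect to require the most care is the compatibility of the perverse $t$-structure with $\boxtimes^{L}$, i.e.\ that each $\CP^1_k\boxtimes^{L}\CP^2_l$ is perverse; everything else is bookkeeping with the decomposition theorem and the Künneth formula.
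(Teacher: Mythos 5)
Your proof is correct, and it follows essentially the same route as the reference the paper cites for this statement (\cite[Proposition 2.1]{Z}, which the paper quotes without reproving): Künneth for $Rf_*$, the decomposition theorem, the fact that an external product of perverse sheaves is perverse (so that perverse truncation of $A_1\boxtimes^{L}A_2$ splits as $\bigoplus_{k+l\le m}\CP^1_k\boxtimes^{L}\CP^2_l[-(k+l)]$), and a leading-term argument using injectivity of the Künneth map. The key compatibility you flag, $^{\Fp}\CH^{n}(K\boxtimes^{L}L)=\bigoplus_{a+b=n}{}^{\Fp}\CH^{a}(K)\boxtimes^{L}{}^{\Fp}\CH^{b}(L)$, is exactly the standard input (BBD 4.2.8), so there is no gap.
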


Let $\Gamma:X\to X\times Y$ be the graph of $f:X\to Y$. Since $\Gamma$ is a proper morphism, the Gysin push-forward can be defined via Borel-Moore homology
\[
\Gamma_*: H^*(X,\BQ)\cong H^{BM}_*(X,\BQ)\to H^{BM}_*(X\times Y,\BQ)\cong H^*(X\times Y,\BQ).
\]
Equivalently,  
\begin{equation} \label{02}
\begin{split}
    \Gamma_*:H^*(X,\BQ)\xrightarrow{a}& H^*(X,\BQ)\otimes H^*_c(X,\BQ)\\
    \xrightarrow{\textup{id}\otimes f_*}& H^*(X,\BQ)\otimes H^*_c(Y,\BQ)\xrightarrow{\textup{id}\otimes \iota} H^*(X,\BQ)\otimes H^*(Y,\BQ),
\end{split}
\end{equation}
where $a$ is the adjoint map of the cup product
\[
\cup:H^*(X,\BQ)\times H^*(X,\BQ) \to H^*(X,\BQ)
\]
and $\iota$ is the forgetful map
\[
\iota:H^*_c(Y,\BQ)\to H^*(Y,\BQ).
\]
 
We have the following perversity estimation of the graph of a proper morphism.
\begin{prop} \label{graph}
Let $f:X\to A$ be a proper morphism between smooth quasi-projective varieties. Let $g:Y\to B$ be a proper morphism between smooth connected quasi-projective varieties. Let $h:X\to Y$ be a proper morphism satisfying the following two properties.
\begin{enumerate}
    \item The pull-back $h^*:H^*(Y,\BQ)\to H^*(X,\BQ)$ preserves the perverse filtrations, i.e. $\Fp^f(h^*(\alpha))\le \Fp^g(\alpha)$ holds for all $\alpha$.
    \item The perverse filtration associated with the morphism $f:X\to A$ is multiplicative.
\end{enumerate}  
Let $\Gamma:X\to X\times Y$ be the graph of $h$. Then the push-forward along the closed embedding $\Gamma$ increases the perversity by at most $2r(g)$, i.e. for any class $\gamma\in H^*(X,\BQ)$,
\[
\Fp^{f\times g}(\Gamma_*(\gamma))\le \Fp^{f}(\gamma)+2r(g).
\]
\end{prop}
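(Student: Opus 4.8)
The plan is to compute $\Gamma_*$ from the factorization \eqref{02} and to bound the perversity of each Künneth summand separately. First I would invoke Proposition \ref{dualbasis} for the morphism $f\colon X\to A$ to fix a basis $\{\beta_i\}$ of $H^*(X,\BQ)$ adapted to $P_\bullet^f H^*(X,\BQ)$ together with its Poincar\'e-dual basis $\{\beta^i\}$ of $H^*_c(X,\BQ)$ adapted to $P_\bullet^f H^*_c(X,\BQ)$, so that $\langle\beta_i,\beta^j\rangle_X=\delta_{ij}$ and $\Fp^f(\beta_i)+\Fp^f(\beta^i)=2r(f)$. Unwinding the adjoint $a$ in \eqref{02} (equivalently, expanding the diagonal class of $X$) gives $a(\gamma)=\sum_i(\gamma\cup\beta_i)\otimes\beta^i$, so that \eqref{02} reads
\[
\Gamma_*(\gamma)=\sum_i(\gamma\cup\beta_i)\boxtimes(\iota h_*\beta^i)\in H^*(X\times Y,\BQ),
\]
where $\iota\colon H^*_c(Y,\BQ)\to H^*(Y,\BQ)$ is the forgetful map.

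Next I would estimate a single summand. By the K\"unneth formula for perverse filtrations (Proposition \ref{Kunneth}),
\[
\Fp^{f\times g}\bigl((\gamma\cup\beta_i)\boxtimes(\iota h_*\beta^i)\bigr)=\Fp^f(\gamma\cup\beta_i)+\Fp^g(\iota h_*\beta^i).
\]
The first term is at most $\Fp^f(\gamma)+\Fp^f(\beta_i)$ by hypothesis (2), the multiplicativity of $P_\bullet^f H^*(X,\BQ)$. For the second term, the forgetful map $\iota$ preserves the perverse filtration, so $\Fp^g(\iota h_*\beta^i)\le\Fp^g(h_*\beta^i)$; and hypothesis (1) is exactly condition (1) of Proposition \ref{pushpull} with $m=0$, so condition (2) of that proposition applied to $\beta^i\in H^*_c(X,\BQ)$ gives $\Fp^g(h_*\beta^i)\le\Fp^f(\beta^i)+2r(g)-2r(f)$. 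Adding the two estimates and using $\Fp^f(\beta_i)+\Fp^f(\beta^i)=2r(f)$ to cancel the $r(f)$-terms,
\[
\Fp^{f\times g}\bigl((\gamma\cup\beta_i)\boxtimes(\iota h_*\beta^i)\bigr)\le\Fp^f(\gamma)+2r(f)+2r(g)-2r(f)=\Fp^f(\gamma)+2r(g).
\]
Since $\Gamma_*(\gamma)$ is a sum of classes of this form and each $P_k^{f\times g}H^*(X\times Y,\BQ)$ is a linear subspace, it follows that $\Fp^{f\times g}(\Gamma_*(\gamma))\le\Fp^f(\gamma)+2r(g)$, which is the assertion.

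I do not anticipate a real obstacle: once Propositions \ref{dualbasis}, \ref{pushpull}, \ref{Kunneth} and the compatibility of $\iota$ with the perverse filtration are available, the argument is pure bookkeeping. The one point requiring care is to keep the four perversity functions apart --- $\Fp^f$ on $H^*(X,\BQ)$ and on $H^*_c(X,\BQ)$, $\Fp^g$ on $H^*(Y,\BQ)$, and $\Fp^{f\times g}$ on $H^*(X\times Y,\BQ)$ --- and to check that each cited statement is used on the correct space. The cancellation of the $2r(f)$-terms is what makes the bound come out as exactly $2r(g)$, and it rests entirely on the existence of simultaneously Poincar\'e-dual and perverse-adapted bases provided by Proposition \ref{dualbasis}; that, rather than anything in the present argument, is the substantive input.
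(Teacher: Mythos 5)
Your proof is correct and follows essentially the same route as the paper's: expand $\Gamma_*(\gamma)=\sum_i(\gamma\cup\beta_i)\boxtimes\iota h_*\beta^i$ using the dual adapted bases from Proposition \ref{dualbasis}, then combine Proposition \ref{Kunneth}, multiplicativity, Proposition \ref{pushpull} with $m=0$, and the duality relation $\Fp(\beta_i)+\Fp(\beta^i)=2r(f)$ to get the bound. The only difference is cosmetic: you are more careful than the paper in writing $h_*$ (the paper's proof writes $\iota f_*\alpha^i$, an evident slip for $\iota h_*\alpha^i$) and in making explicit the step that $\iota$ preserves the perverse filtration.
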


\begin{proof}
Let $\{\alpha_i\}$ and $\{\alpha^i\}$ be dual bases of $H^*(X,\BQ)$ and $H^*_c(X,\BQ)$ adapted to the perverse filtration associated with $f$ obtained in Proposition \ref{dualbasis}. By (\ref{02}), we have
\[
\Gamma_*(\gamma)=\sum_{i}\gamma\alpha_i\otimes\iota f_*\alpha^i.
\]
Then by Proposition \ref{pushpull}, condition (1) implies
\[
\Fp^g(h_*\alpha^i))\le\Fp^f(\alpha^i)+2r(g)-2r(f).
\]
Now by Proposition \ref{Kunneth}, condition (2) and Proposition \ref{dualbasis},
\[
\begin{split}
\Fp^{f\times g}(\gamma\alpha_i\otimes \iota f_*\alpha^i)\le&\Fp^f(\gamma)+\Fp^f(\alpha_i)+\Fp^f(\alpha^i)+2r(g)-2r(f)\\
\le&\Fp^f(\gamma)+2r(g).
\end{split}
\]
So $\Fp^{f\times g}(\Gamma_*(\gamma))\le \Fp^{f}(\gamma)+2r(g)$.
\end{proof}

As an application, we have the following generalization of \cite[Proposition 3.8]{Z} and \cite[Proposition 5.5]{Z}.

\begin{prop}\label{diag}
Let $f:X\to Y$ be a proper morphism between smooth quasi-projective varieties. Let $f^n:X^n\to Y^n$ be the induced morphism of Cartesian products. Let $\Delta_n:X\to X^n$ be the diagonal embedding. Suppose that the perverse filtration associated with $f$ is multiplicative.  Then
\[
\Fp^{f^n}(\Delta_{n*}(\alpha))\le \Fp^f(\alpha)+2(n-1)r(f).
\]
\end{prop}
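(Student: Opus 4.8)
The plan is to realise the diagonal as a graph and apply Proposition \ref{graph} once. The key point is that the small diagonal $\Delta_n\colon X\to X^n$ is exactly the graph of the smaller diagonal $\Delta_{n-1}\colon X\to X^{n-1}$: under the identification $X\times X^{n-1}=X^n$, the map $x\mapsto\bigl(x,\Delta_{n-1}(x)\bigr)$ equals $x\mapsto(x,x,\dots,x)=\Delta_n(x)$. I would therefore apply Proposition \ref{graph} with its triple $(f,g,h)$ specialised to $(f,\ f^{n-1},\ \Delta_{n-1})$: here $\Delta_{n-1}$ is proper, being a closed immersion, and under $X\times X^{n-1}=X^n$, $Y\times Y^{n-1}=Y^n$ the product morphism $f\times f^{n-1}$ is $f^n$. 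Granting its hypotheses, the conclusion of Proposition \ref{graph} reads
\[
\Fp^{f^n}\bigl(\Delta_{n,*}(\alpha)\bigr)\le\Fp^f(\alpha)+2r(f^{n-1})=\Fp^f(\alpha)+2(n-1)r(f),
\]
the last equality using $r(f^{n-1})=(n-1)r(f)$, which holds since $X^{n-1}\times_{Y^{n-1}}X^{n-1}=(X\times_Y X)^{n-1}$.

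It remains to check the two hypotheses of Proposition \ref{graph}. Hypothesis (2), the multiplicativity of the perverse filtration of $f\colon X\to Y$, is precisely our standing assumption. Hypothesis (1), that $\Delta_{n-1}^*\colon H^*(X^{n-1},\BQ)\to H^*(X,\BQ)$ preserves the perverse filtrations, i.e. that $\Fp^f(\Delta_{n-1}^*\beta)\le\Fp^{f^{n-1}}(\beta)$ for all $\beta$, is the substantive point. I would deduce it from the Künneth (tensor-product) description of the perverse filtration on the self-product,
\[
P_kH^*(X^{n-1},\BQ)=\sum_{k_1+\cdots+k_{n-1}=k}P_{k_1}H^*(X,\BQ)\otimes\cdots\otimes P_{k_{n-1}}H^*(X,\BQ),
\]
in which the inclusion $\supseteq$ is Proposition \ref{Kunneth} and the reverse inclusion follows from the decomposition theorem applied to $f^{n-1}$ together with the fact that an exterior product of perverse sheaves is perverse. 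Granting this, every $\beta\in P_kH^*(X^{n-1},\BQ)$ is a sum of decomposable classes $\beta_1\boxtimes\cdots\boxtimes\beta_{n-1}$ with $\sum_i\Fp^f(\beta_i)\le k$; since $\Delta_{n-1}^*(\beta_1\boxtimes\cdots\boxtimes\beta_{n-1})=\beta_1\cup\cdots\cup\beta_{n-1}$, the multiplicativity of the perverse filtration of $f$ places $\beta_1\cup\cdots\cup\beta_{n-1}$, hence $\Delta_{n-1}^*\beta$, in $P_kH^*(X,\BQ)$, as needed.

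I expect hypothesis (1) to be the main obstacle: it asserts that pullback along the iterated diagonal preserves the perverse filtration, and this rests on the tensor-product description of $P_\bullet$ on $X^{n-1}$, which is standard but genuinely uses the decomposition theorem for $f^{n-1}$. Once that is in hand the remainder of the argument is formal. One further point worth stressing is the choice to present $\Delta_n$ as the graph of $\Delta_{n-1}$ rather than, say, composing diagonals with coordinate projections: the projections $X^{n-1}\to X$ need not be proper when $X$ is non-proper --- which is the relevant case here --- whereas $\Delta_{n-1}$ and its graph are always proper, and this is what makes Proposition \ref{graph}, whose hypotheses require properness of the maps involved, applicable.
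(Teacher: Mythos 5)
Your argument is correct, but it is organized differently from the paper's. The paper handles $n=2$ by taking $f=g$ and $h=\textup{id}$ in Proposition \ref{graph} (so $\Delta_2$ is the graph of the identity and hypothesis (1) is vacuous), and then gets general $n$ by induction via the factorization $\Delta_n=(\Delta_2\times\textup{id}^{n-2})\circ\Delta_{n-1}$ together with Proposition \ref{Kunneth}. You instead apply Proposition \ref{graph} once, with $g=f^{n-1}$ and $h=\Delta_{n-1}$, which shifts all the substance into hypothesis (1), namely $\Fp^f(\Delta_{n-1}^*\beta)\le\Fp^{f^{n-1}}(\beta)$. Your verification of that is sound: the tensor-product description of $P_\bullet H^*(X^{n-1},\BQ)$ you invoke is exactly the paper's (\ref{987}) (decomposition theorem plus the fact that external products of perverse sheaves are perverse), and combining it with the assumed multiplicativity of the perverse filtration of $f$ gives the claim. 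The identifications $f\times f^{n-1}=f^n$ and $r(f^{n-1})=(n-1)r(f)$ are also correct, so a single application of Proposition \ref{graph} yields the stated bound with no induction. What each route buys: the paper's induction keeps every application of Proposition \ref{graph} trivial on the pullback side at the cost of an extra K\"unneth bookkeeping step in the inductive step; yours isolates the one genuinely non-formal input (that diagonal pullback preserves perversity, which is precisely where multiplicativity of $f$ enters) and makes the rest formal. One shared, harmless caveat: Proposition \ref{graph} as stated asks the source and target of $g$ to be connected, which in your application are $X^{n-1}$ and $Y^{n-1}$; this is no worse than the paper's own use of that proposition with $g=f$.
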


\begin{proof}
The $n=2$ case follows from taking $f=g$ and $h=\textup{id}$ in Proposition \ref{graph}. The general $n$ follows from an induction argument using
\[
\Delta_n:X\xrightarrow{\Delta_{n-1}} X^{n-1}\xrightarrow{\Delta_2\times \textup{id}^{n-2}} X^n.
\]
\end{proof}

For later use, we also include the following easy fact.
\begin{prop}\label{finite}
Let $X$, $Y$ and $Z$ be quasi-projective varieties. Let $f:X\to Y$ be a proper morphism and $g:Y\to Z$ be a finite surjective morphism. Then the perverse filtration associated with $f$ is identical with the one associated with $g\circ f$, \emph{i.e.}
\[
P^f_kH^*(X,\BQ)=P^{g\circ f}_kH^*(X,\BQ).
\]
\end{prop}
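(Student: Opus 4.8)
The plan is to reduce the statement to the single fact that, for a finite morphism $g$, the pushforward $Rg_*$ is $t$-exact for the perverse $t$-structure, and hence commutes with the perverse truncation functors ${}^{\Fp}\tau_{\le k}$. I would establish this first: since $g$ is finite it is affine, so $Rg_*$ is right $t$-exact, and it is proper, so $Rg_!=Rg_*$, which is left $t$-exact; hence $Rg_*$ is $t$-exact. (Concretely: $g$ has $0$-dimensional fibers, so $Rg_*=g_*$ is exact with $\mathcal H^i(g_*K)=g_*\mathcal H^i(K)$, and $\dim\operatorname{supp} g_*\mathcal H^i(K)=\dim\operatorname{supp}\mathcal H^i(K)$ since $g$ is finite, which gives preservation of the perverse support condition; the cosupport condition follows by Verdier duality using $g_!=g_*$.) Thus there is a canonical isomorphism of functors $Rg_*\circ{}^{\Fp}\tau_{\le k}\cong{}^{\Fp}\tau_{\le k}\circ Rg_*$, compatible with the canonical morphisms to the identity functor.

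Next I would check that the normalizing shifts coincide, that is, $r(g\circ f)=r(f)$. For $z\in Z$ the fiber $(g\circ f)^{-1}(z)=f^{-1}(g^{-1}(z))$ is a finite union of fibers of $f$, so $\{z:\dim(g\circ f)^{-1}(z)\ge k\}=g\big(\{y:\dim f^{-1}(y)\ge k\}\big)$, which has the same dimension as $\{y:\dim f^{-1}(y)\ge k\}$ because $g$ is finite; feeding this into the standard formula $\dim(X\times_Y X)=\max_{k\ge0}\big(2k+\dim\{y:\dim f^{-1}(y)\ge k\}\big)$ and its analogue over $Z$ gives $\dim(X\times_Z X)=\dim(X\times_Y X)$. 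The comparison is then formal. Set $K:=Rf_*\BQ_X[\dim X-r(f)]$; then $R(g\circ f)_*\BQ_X[\dim X-r(g\circ f)]=Rg_*K$ since $R(g\circ f)_*=Rg_*\circ Rf_*$. Applying $Rg_*$ to the truncation morphism ${}^{\Fp}\tau_{\le k}K\to K$ and using $t$-exactness, the truncation morphism ${}^{\Fp}\tau_{\le k}(Rg_*K)\to Rg_*K$ is identified with $Rg_*({}^{\Fp}\tau_{\le k}K)\to Rg_*K$. Taking $R\Gamma(Z,-)$ and using the natural isomorphism $R\Gamma(Z,-)\circ Rg_*\cong R\Gamma(Y,-)$ (valid since the morphism from $Z$ to a point factors through $g$), together with $\BH^*(Z,R(g\circ f)_*\BQ_X)=\BH^*(Y,Rf_*\BQ_X)=H^*(X,\BQ)$, the morphism (\ref{01}) attached to $g\circ f$ is carried to the one attached to $f$. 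Hence they have the same image, which is exactly $P^{g\circ f}_kH^*(X,\BQ)=P^f_kH^*(X,\BQ)$.

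I do not expect a serious obstacle. The one point that genuinely needs to be verified, rather than simply asserted, is the equality $r(g\circ f)=r(f)$: this is what guarantees that the two perverse filtrations agree with the \emph{same} index $k$, as stated, and not merely up to a re-indexing. Everything else is a formal consequence of the perverse $t$-exactness of finite pushforward.
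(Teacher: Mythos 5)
Your argument is correct and follows essentially the same route as the paper: both rest on the perverse $t$-exactness of $Rg_*$ for finite $g$ (so that $g_*$ commutes with ${}^{\Fp}\tau_{\le k}$), plus the check that $r(g\circ f)=r(f)$ via stratification by fiber dimension so that the two filtrations match with the same index. You simply supply more detail than the paper does on both points (the affine-plus-proper justification of $t$-exactness and the explicit fiber-dimension computation), which is fine.
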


\begin{proof}
Since an finite morphism is $t$-exact, $g_*$ commutes perverse truncations $^\Fp\tau_{\le k}$ and we have the following commutative diagram
\begin{equation}\label{10001}
    \begin{tikzcd}
^\Fp\tau_{\le k}R(g\circ f)_*\BQ_{X}\arrow[r,"\cong"]\arrow[d]&g_*{^\Fp\tau_{\le k}}Rf_*\BQ_{X}\arrow[d]\\
R(g\circ f)_*\BQ_{X}\arrow[r,equal]&g_*Rf_*\BQ_{X}.
\end{tikzcd}
\end{equation}
By the definition of perverse filtrations (\ref{01}), the diagram ($\ref{10001}$) implies
\[
P^f_{k-\dim X+r(g\circ f)}H^*(X,\BQ)=P^{g\circ f}_{k-\dim X+r(f)}H^*(X,\BQ).
\]
It remains to check $r(f)=r(g\circ f)$, which follows from the standard argument of stratification by fiber dimensions. 
\end{proof}

\section{Hilbert schemes of fibered surfaces}
\subsection{Partitions}
We recall some notations of partitions in this section. We say $\nu=(\nu_1,\nu_2,\cdots,\nu_l)$ is a partition of an integer $n$ if $\nu_1\ge\cdots\ge\nu_l>0$ and $n=\nu_1+\cdots+\nu_l$. The integer $l$ is called the length of the partition $\nu$, also denoted as $l(\nu)$. The greatest common divisor $\gcd(\nu)$ is defined as $\gcd(\nu_1,\cdots,\nu_l)$.
For a given partition $\nu$, denote by $a_i$ the number of times that $i$ appears in the partition. Then $n=a_1+2a_2+\cdots+na_n$ and $l=a_1+\cdots+a_n$. We also write $\nu=1^{a_1}\cdots n^{a_n}$.

\subsection{Products and symmetric products}
Let $X$ be a smooth quasi-projective variety. We denote by $X^n$ the $n$-fold Cartesian product and by $X^{(n)}=X^n/\mathfrak{S}_n$ the symmetric product, where $\mathfrak{S}_n$ is the symmetric group. The elements in $X^{(n)}$ are denoted as $x_1+\cdots+x_n$, where $x_i\in X$. Let $\nu=1^{a_1}\cdots n^{a_n}$ be a partition of $n$. Denote by $X^{\nu}$ or $X^{l(\nu)}$ the Cartesian product $X^{a_1}\times\cdots\times X^{a_n}$. Denote by $X^{(\nu)}$ the product of symmetric products $X^{(a_1)}\times \cdots\times X^{(a_n)}$. Let $\mathfrak{S}_\nu=\mathfrak{S}_{a_1}\times\cdots\times \mathfrak{S}_{a_n}$. Then $X^{(\nu)}=X^{\nu}/\mathfrak{S}_{\nu}$. By a theorem of Grothendieck, 
\begin{equation}\label{989}
H^*(X^{(n)},\BQ)=H^*(X^n,\BQ)^{\mathfrak{S}_n}=\textup{Sym}^n H^*(X,\BQ).
\end{equation}
We also have a closed embedding 
\begin{eqnarray*}
   \iota^{(\nu)}:&X^{(\nu)}&\to X^{(n)}\\
    &\displaystyle\left(\sum_{j=1}^{a_1}x_{1j},\cdots,\sum_{j=1}^{a_n}x_{nj}\right)&\mapsto \sum_{i=1}^n\sum_{j=1}^{a_i}ix_{ij}.
\end{eqnarray*}

Let $f:X\to Y$ be a proper morphism between smooth quasi-projective varieties. Denote by $f^n:X^n\to Y^n$ and $f^{(n)}:X^{(n)}\to Y^{(n)}$ the induced maps. By Proposition \ref{Kunneth}, The perverse filtrations associated with $f^n$ is
\begin{equation} \label{987}
    P_k^{f^n}H^*(X^n,\BQ)=\left\langle\alpha_1\boxtimes\cdots\boxtimes\alpha_n\mid \sum_{i=1}^n\Fp(\alpha_i)\le k.\right\rangle
\end{equation}
Taking the $\mathfrak{S}_n$-invariant part of (\ref{987}) yields the filtration associated with $f^{(n)}$. 

Let $\nu$ be a partition, then we also have the induced maps $f^{\nu}:X^\nu\to Y^\nu$ and $f^{(\nu)}:X^{(\nu)}\to Y^{(\nu)}$. The perverse filtration associated with $f^{\nu}$ and $f^{(\nu)}$ are computed easily by Proposition \ref{Kunneth} and (\ref{989}).

\subsection {Perverse filtration for Hilbert schemes}
Let $f:S\to C$ be a proper surjective morphism from a smooth quasi-projective surface $S$ to a smooth curve $C$. Following the notations in Section 3.1 and 3.2, we have the diagram
\[
\begin{tikzcd}
 & & S^{[n]}\arrow[d,swap,"\pi"]\arrow[dd,bend left,"h"]\\
S^{\nu}\arrow[r,"/\mathfrak{S}_\nu"]\arrow[d,swap,"f^{\nu}"]& S^{(\nu)}\arrow[r,"\iota^{(\nu)}_S"]\arrow[d,swap,"f^{(\nu)}"]& S^{(n)}\arrow[d,swap,"f^{(n)}"]\\
C^{\nu}\arrow[r,"/\mathfrak{S}_\nu"]& C^{(\nu)}\arrow[r,"\iota^{(\nu)}_C"]& C^{(n)}.
\end{tikzcd}
\]
\begin{thm}\cite[Theorem 2, Theorem 4]{GS} \label{hilb}
Let $S$ be a smooth quasi-projective surface. Then there is a canonical decomposition
\begin{equation} \label{0123}
    R\pi_*\BQ_{S^{[n]}}[2n]=\bigoplus_{\nu\vdash n}\iota^{(\nu)}_{S*}\BQ_{S^{(\nu)}}[2l(\nu)].
\end{equation}

In particular, there is a decomposition on the cohomology.
\begin{equation} \label{0124}
    H^*(S^{[n]},\BQ)=\bigoplus_{\nu\vdash n} H^*(S^{(\nu)},\BQ)[2l(\nu)-2n].
\end{equation}
\end{thm}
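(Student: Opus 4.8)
The plan is to apply the decomposition theorem to the Hilbert--Chow morphism $\pi\colon S^{[n]}\to S^{(n)}$ (which is projective), after establishing that $\pi$ is semismall. First I would stratify the base by partition type: for $\nu=1^{a_1}\cdots n^{a_n}\vdash n$ let $S^{(n)}_\nu\subset S^{(n)}$ be the locally closed locus of $0$-cycles $\sum_{i,j}i\,x_{ij}$ whose underlying points $x_{ij}$ are pairwise distinct. These subvarieties partition $S^{(n)}$, with $\dim S^{(n)}_\nu=2l(\nu)$. Since the map $\iota^{(\nu)}_S$ recalled in Section 3.2 is a closed embedding, it identifies $S^{(\nu)}=S^{(a_1)}\times\cdots\times S^{(a_n)}$ with its image, which is $\overline{S^{(n)}_\nu}$ (a closed irreducible subvariety of dimension $2l(\nu)$ containing the dense stratum $S^{(n)}_\nu$). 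In particular $\overline{S^{(n)}_\nu}$ has only quotient singularities and is therefore a rational homology manifold, so $\BQ_{S^{(\nu)}}[2l(\nu)]=\mathrm{IC}_{S^{(\nu)}}(\BQ)$ and consequently $\iota^{(\nu)}_{S*}\BQ_{S^{(\nu)}}[2l(\nu)]=\mathrm{IC}_{\overline{S^{(n)}_\nu}}(\BQ)$.

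Next I would check semismallness. Over a point of $S^{(n)}_\nu$ the fibre of $\pi$ is the product $\prod_k\Hilb^{\nu_k}_0$ of punctual Hilbert schemes, and by Brian\c{c}on's theorem each $\Hilb^i_0$ is irreducible of dimension $i-1$; hence the fibre over $S^{(n)}_\nu$ is irreducible of dimension $\sum_k(\nu_k-1)=n-l(\nu)$, and
\[
\dim S^{(n)}_\nu+2\bigl(n-l(\nu)\bigr)=2l(\nu)+2\bigl(n-l(\nu)\bigr)=2n=\dim S^{[n]}.
\]
Thus $\pi$ is semismall and every stratum $S^{(n)}_\nu$ is relevant; equivalently $r(\pi)=0$, which is why the natural shift in (\ref{0123}) is $[2n]$. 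The decomposition theorem for semismall maps then yields
\[
R\pi_*\BQ_{S^{[n]}}[2n]=\bigoplus_{\nu\vdash n}\mathrm{IC}_{\overline{S^{(n)}_\nu}}(L_\nu),
\]
where $L_\nu$ is the rank-one local system on $S^{(n)}_\nu$ whose stalk is the top cohomology of the (irreducible) fibre.

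It remains to see that each $L_\nu$ is trivial. The monodromy of $\pi_1(S^{(n)}_\nu)$ moves the marked points around in $S$ and permutes those of equal multiplicity; under the identification of the fibre's top cohomology with the one-dimensional space $\bigotimes_k H^{2(\nu_k-1)}(\Hilb^{\nu_k}_0)$, moving a point acts trivially because irreducibility of $\Hilb^i_0$ gives a canonical generator of each tensor factor, and permuting two equal-multiplicity points merely swaps two such factors, with sign $+1$ since the cohomological degree $2(\nu_k-1)$ is even. Hence $L_\nu\cong\BQ_{S^{(n)}_\nu}$, and combining with the first paragraph gives (\ref{0123}). This decomposition is canonical because the summands $\mathrm{IC}_{\overline{S^{(n)}_\nu}}(\BQ)$ are simple perverse sheaves with pairwise distinct supports, each occurring with multiplicity one, so (\ref{0123}) is the decomposition into isotypic components. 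Taking hypercohomology of (\ref{0123}) then yields (\ref{0124}).

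The technical heart is the geometry of punctual Hilbert schemes of a smooth surface: Fogarty's smoothness of $S^{[n]}$ and, above all, Brian\c{c}on's irreducibility theorem. The weaker estimate $\dim\Hilb^i_0\le i-1$ (Iarrobino) already suffices for semismallness, but irreducibility is what makes the local systems $L_\nu$ rank one, and without it (\ref{0123}) would involve genuine local systems. A secondary point requiring care is the identification $\iota^{(\nu)}_{S*}\BQ_{S^{(\nu)}}[2l(\nu)]=\mathrm{IC}_{\overline{S^{(n)}_\nu}}(\BQ)$, which rests on $\iota^{(\nu)}_S$ being a closed embedding together with the rational-homology-manifold property of symmetric powers of a smooth variety.
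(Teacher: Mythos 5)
This statement is quoted from G\"ottsche--S\"orgel and the paper offers no proof of its own; your argument is precisely the standard proof of that cited result (semismallness of the Hilbert--Chow morphism via the partition stratification and Brian\c{c}on's irreducibility theorem, triviality of the monodromy local systems, and canonicity from multiplicity-one isotypic components), and it is correct. One small caveat: $\iota^{(\nu)}_S$ is in general only finite and birational onto $\overline{S^{(n)}_\nu}$, not a closed embedding (e.g.\ for $\nu=1^2 2^1$ the cycles $a+a+2b$ and $b+b+2a$ coincide), but since $S^{(\nu)}$ is normal and a rational homology manifold this still gives $\iota^{(\nu)}_{S*}\BQ_{S^{(\nu)}}[2l(\nu)]=\mathrm{IC}_{\overline{S^{(n)}_\nu}}(\BQ)$, so your conclusion stands.
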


The perverse filtration associated with $h:S^{[n]}\to C^{(n)}$ is calculated in \cite{Z}.

\begin{prop}\cite[Corollary 4.14]{Z} \label{4.7}
Under the canonical isomorphism (\ref{0124}), the perverse filtration
\begin{equation} \label{0125}
    P_k^h H^*(S^{[n]},\BQ)= \bigoplus_{\nu\vdash n}P_{k+l(\nu)-n}^{f^{(\nu)}} H^*(S^{(\nu)},\BQ)[2l(\nu)-2n].
\end{equation}
\end{prop}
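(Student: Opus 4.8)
The plan is to compute the full pushforward $Rh_*\BQ_{S^{[n]}}$ directly from the G\"ottsche--Soergel decomposition (Theorem \ref{hilb}) and to read off the perverse filtration summand by summand, the index shift $l(\nu)-n$ arising purely from a comparison of the normalization shifts that define the perverse filtrations of $h$ and of the individual $f^{(\nu)}$.

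First I would factor $h=f^{(n)}\circ\pi$ and push the canonical decomposition (\ref{0123}) forward along $f^{(n)}$. By functoriality of pushforward along the commuting right square $f^{(n)}\circ\iota^{(\nu)}_S=\iota^{(\nu)}_C\circ f^{(\nu)}$ of the diagram preceding Theorem \ref{hilb}, together with the exactness of the closed pushforwards $\iota^{(\nu)}_{S*}$ and $\iota^{(\nu)}_{C*}$, each summand $Rf^{(n)}_*\iota^{(\nu)}_{S*}\BQ_{S^{(\nu)}}$ becomes $\iota^{(\nu)}_{C*}Rf^{(\nu)}_*\BQ_{S^{(\nu)}}$. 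This gives
\[
Rh_*\BQ_{S^{[n]}}[2n]=\bigoplus_{\nu\vdash n}\iota^{(\nu)}_{C*}Rf^{(\nu)}_*\BQ_{S^{(\nu)}}[2l(\nu)].
\]

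Next I would apply perverse truncation to this splitting. Since $\iota^{(\nu)}_C$ is a closed embedding, hence finite, it is $t$-exact (the same input as in Proposition \ref{finite}), so $\iota^{(\nu)}_{C*}$ commutes with ${}^\Fp\tau_{\le k}$; truncation also commutes with finite direct sums. Combined with the shift identity ${}^\Fp\tau_{\le k}(K[m])=({}^\Fp\tau_{\le k+m}K)[m]$, this reduces the claim to a comparison of truncation levels on the unshifted complexes $Rf^{(\nu)}_*\BQ_{S^{(\nu)}}$. Writing the normalized object for $h$ as $Rh_*\BQ_{S^{[n]}}[2n-r(h)]$ and that for $f^{(\nu)}$ as $Rf^{(\nu)}_*\BQ_{S^{(\nu)}}[2l(\nu)-r(f^{(\nu)})]$, the inner truncation level of the $\nu$-summand of $h$ at index $k$ is $k+2l(\nu)-r(h)$, while for $f^{(\nu)}$ at index $j$ it is $j+2l(\nu)-r(f^{(\nu)})$; these agree precisely when $j=k+r(f^{(\nu)})-r(h)$. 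I would then compute the defects of semismallness: $f$ has purely $1$-dimensional fibers, so $r(f)=1$; hence $r(f^{(m)})=m$ (the symmetric quotient is finite and preserves fiber dimensions), and by additivity over products $r(f^{(\nu)})=\sum_i r(f^{(a_i)})=\sum_i a_i=l(\nu)$; finally $h$ is equidimensional of relative dimension $n$, giving $r(h)=n$. Thus $r(f^{(\nu)})-r(h)=l(\nu)-n$, and taking $\BH^{\bullet}$ and images as in (\ref{01}) matches the cohomological degrees $d'=d+2l(\nu)-2n$, producing (\ref{0125}) with the shift $[2l(\nu)-2n]$ of (\ref{0124}).

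The hard part will be the defect-of-semismallness bookkeeping, in particular verifying $r(h)=n$: one must check that over the deeper strata of $C^{(n)}$ (where base points collide) the growth of the Hilbert--Chow fibers of $\pi$ is exactly compensated by the drop in dimension of the relevant stratum, so that every fiber of $h$ has dimension $n$ and no stratum inflates $\dim(S^{[n]}\times_{C^{(n)}}S^{[n]})$ beyond $3n$. A secondary point needing care is that the G\"ottsche--Soergel splitting is only an isomorphism in the derived category, so one must ensure the perverse truncations respect it termwise; this is exactly what the $t$-exactness of $\iota^{(\nu)}_{C*}$ and the additivity of ${}^\Fp\tau_{\le k}$ guarantee. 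Once these are in place the index shift $l(\nu)-n$ is forced and the decomposition of the filtration follows.
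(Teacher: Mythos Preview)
The paper does not give its own proof of this statement; it is quoted verbatim as \cite[Corollary 4.14]{Z}. Your sketch is the natural argument and matches precisely the method the present paper uses for the Kummer analogue in the proof of Theorem \ref{thm}: push the G\"ottsche--Soergel splitting forward along $f^{(n)}$, use $t$-exactness of the finite maps $\iota^{(\nu)}_{C*}$ to commute ${}^\Fp\tau_{\le k}$ past them, and then compare the normalization shifts to obtain $j=k+l(\nu)-n$. Your index bookkeeping is correct, and you have correctly isolated the only non-formal ingredient, namely $r(h)=n$, which amounts to showing that $h$ is equidimensional of relative dimension $n$; this is what \cite{Z} verifies by the stratification argument you describe. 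One small slip: $\iota^{(\nu)}_C$ is in general only finite, not a closed embedding (e.g.\ for $\nu=1^22^1$ the points $(c'+c',c)$ and $(c+c,c')$ of $C^{(2)}\times C$ have the same image $2c+2c'$), but since you only use its $t$-exactness this does not affect your argument.
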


\begin{cor} \label{decomp}
Let $\alpha_\nu\in H^*(A^{(\nu)},\BQ)$ be a cohomology class. Denote by $\alpha_\nu^{[n]}\in H^*(A^{[n]},\BQ)$ its image in $H^*(A^{[n]},\BQ)$ via the decomposition (\ref{0124}). Then 
\[
\Fp^h(\alpha_\nu^{[n]})=\Fp^{f^{(\nu)}}(\alpha_\nu)+n-l(\nu).
\]
Let $\alpha=\sum_\nu \alpha_\nu^{[n]}$. Then 
\[
\Fp^h(\alpha)=\max_\nu\left\{\Fp^h\left(\alpha_\nu^{[n]}\right)\right\}=\max_\nu\left\{\Fp^{f^{(\nu)}}(\alpha_\nu)+n-l(\nu)\right\}.
\]
\end{cor}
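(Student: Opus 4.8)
The plan is to derive Corollary \ref{decomp} directly from Proposition \ref{4.7} together with the $\mathfrak{S}_\nu$-equivariant structure already recorded. First I would unwind the bookkeeping in the decomposition (\ref{0124}): a class $\alpha_\nu \in H^*(A^{(\nu)},\BQ)$ is placed in $H^*(A^{[n]},\BQ)$ with a cohomological shift by $2l(\nu)-2l$ where $l=l(\nu)$ is the full length — wait, one must be careful here that the two occurrences of ``$l$'' in (\ref{0124}) refer to $l(\nu)$ and to $n$ respectively (this is consistent with the reindexing in (\ref{0125}), where the shift is $2l(\nu)-2n$), so the summand contributing $\alpha_\nu^{[n]}$ is $H^*(A^{(\nu)},\BQ)[2l(\nu)-2n]$. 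With this identification fixed, the single-class statement $\Fp^h(\alpha_\nu^{[n]})=\Fp^{f^{(\nu)}}(\alpha_\nu)+n-l(\nu)$ is just a transcription of (\ref{0125}): the class $\alpha_\nu^{[n]}$ lies in $P_k^h$ exactly when its representative lies in $P_{k+l(\nu)-n}^{f^{(\nu)}}H^*(A^{(\nu)},\BQ)$, i.e. when $k+l(\nu)-n \ge \Fp^{f^{(\nu)}}(\alpha_\nu)$, and the minimal such $k$ is $\Fp^{f^{(\nu)}}(\alpha_\nu)+n-l(\nu)$.

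Next, for a general class $\alpha = \sum_\nu \alpha_\nu^{[n]}$, I would use the fact that the perverse filtration respects the direct sum decomposition (\ref{0125}) — it is a direct sum of sub-filtrations indexed by $\nu$ — so $\alpha \in P_k^h$ if and only if each summand $\alpha_\nu^{[n]} \in P_k^h$. Hence $\Fp^h(\alpha)$ is the least $k$ such that all $\alpha_\nu^{[n]}$ lie in $P_k^h$, which is precisely $\max_\nu \Fp^h(\alpha_\nu^{[n]}) = \max_\nu\{\Fp^{f^{(\nu)}}(\alpha_\nu)+n-l(\nu)\}$. Strictly, the empty terms $\alpha_\nu = 0$ should be excluded from the max, but that is harmless as long as $\alpha \ne 0$.

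There is essentially no obstacle here: the statement is a formal consequence of Proposition \ref{4.7} once the grading shift in (\ref{0124}) vs. (\ref{0125}) is matched up correctly, and the only point requiring a sentence of justification is that a perversity can be read off summand-by-summand from a decomposition that is compatible with the perverse filtration — which is exactly the content of (\ref{0125}). If one wanted to be fully self-contained I would cite that the image subspaces $P_k^h H^*$ in Proposition \ref{4.7} are literally given as internal direct sums over $\nu$, so membership is checked coordinatewise. I expect the write-up to be a short paragraph.

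\begin{proof}
By Proposition \ref{4.7}, the decomposition (\ref{0124}) is compatible with the perverse filtration, and the $\nu$-summand of $P_k^hH^*(A^{[n]},\BQ)$ is $P_{k+l(\nu)-n}^{f^{(\nu)}}H^*(A^{(\nu)},\BQ)$ (placed in the appropriate degree). Hence $\alpha_\nu^{[n]}\in P_k^hH^*(A^{[n]},\BQ)$ if and only if $\alpha_\nu\in P_{k+l(\nu)-n}^{f^{(\nu)}}H^*(A^{(\nu)},\BQ)$, i.e. if and only if $k\ge \Fp^{f^{(\nu)}}(\alpha_\nu)+n-l(\nu)$. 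Taking the minimal such $k$ gives $\Fp^h(\alpha_\nu^{[n]})=\Fp^{f^{(\nu)}}(\alpha_\nu)+n-l(\nu)$. For $\alpha=\sum_\nu\alpha_\nu^{[n]}$, since $P_k^hH^*(A^{[n]},\BQ)=\bigoplus_\nu P_k^hH^*(A^{[n]},\BQ)\cap\big(\text{$\nu$-summand}\big)$, we have $\alpha\in P_k^hH^*(A^{[n]},\BQ)$ if and only if $\alpha_\nu^{[n]}\in P_k^hH^*(A^{[n]},\BQ)$ for every $\nu$, i.e. if and only if $k\ge \Fp^h(\alpha_\nu^{[n]})$ for all $\nu$ with $\alpha_\nu\neq 0$. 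Therefore $\Fp^h(\alpha)=\max_\nu\{\Fp^h(\alpha_\nu^{[n]})\}=\max_\nu\{\Fp^{f^{(\nu)}}(\alpha_\nu)+n-l(\nu)\}$.
\end{proof}
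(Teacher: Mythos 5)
Your proof is correct and matches the paper's intent: the paper states Corollary \ref{decomp} without proof as an immediate consequence of Proposition \ref{4.7}, and your argument — reading off perversity summand-by-summand from the direct-sum description of $P_k^h$ and taking the max over nonzero components — is exactly the implicit reasoning. Your observation that the shift $[2l(\nu)-2l]$ in (\ref{0124}) should be read as $[2l(\nu)-2n]$ (consistent with (\ref{0125})) is a correct catch of a typo.
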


The following result is a slight generalization of \cite[Theorem 4.18]{Z} and \cite[Theorem 5.6]{Z}.
\begin{thm} \label{multi}
Let $f:S\to C$ be proper surjective morphism from a smooth quasi-projective surface $S$ with trivial canonical bundle to a smooth curve $C$. Let $h:S^{[n]}\to C^{(n)}$ be the induced morphism. Suppose further that $S$ admits a smooth compactification $\bar{S}$ such that the restriction $H^*(\bar{S})\to H^*(S)$ is surjective. Then the perverse filtration associated with $h$ is multiplicative.
\end{thm}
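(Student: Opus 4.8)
The plan is to reduce the multiplicativity of the perverse filtration for $h:S^{[n]}\to C^{(n)}$ to the known ring structure of $H^*(S^{[n]},\BQ)$ together with the perversity formula in Corollary \ref{decomp}. Recall that for a surface $S$ with trivial canonical bundle, the cohomology ring $H^*(S^{[n]},\BQ)$ admits an explicit presentation (Nakajima/Lehn--Sorger, Li--Qin--Wang, or in the form used by \cite{Z}) in terms of classes supported on the ``$\nu$-strata''. Concretely, using Definition \ref{dfn1}, the ring is generated by classes of the form $\alpha^{[n]}_\nu$ for $\nu$ with small length (in fact length $n-1$, i.e. $\nu=(2,1^{n-2})$, plus the length-$n$ part which is just $\mathrm{Sym}^n H^*(S)$), and the key structural input is a cup-product formula expressing $\alpha^{[n]}_\mu\cup\beta^{[n]}_\nu$ as a sum of terms $\gamma^{[n]}_\lambda$ together with a control on how $l(\lambda)$ drops. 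The surjectivity hypothesis $H^*(\bar S)\twoheadrightarrow H^*(S)$ is exactly what is needed to ensure these generation and structure results, proved for projective surfaces, descend to the quasi-projective $S$.

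First I would set up the combinatorial bookkeeping: by Corollary \ref{decomp}, for $\alpha=\sum_\nu\alpha^{[n]}_\nu$ one has $\Fp^h(\alpha)=\max_\nu\{\Fp^{f^{(\nu)}}(\alpha_\nu)+n-l(\nu)\}$, so it suffices to check the product estimate on single-stratum classes $\alpha^{[n]}_\mu$ and $\beta^{[n]}_\nu$ and then bound the perversity of each $\lambda$-component of the product. Thus the goal becomes: if $\alpha^{[n]}_\mu\cup\beta^{[n]}_\nu=\sum_\lambda\gamma^{[n]}_\lambda$, then for each $\lambda$ appearing,
\[
\Fp^{f^{(\lambda)}}(\gamma_\lambda)+n-l(\lambda)\ \le\ \bigl(\Fp^{f^{(\mu)}}(\alpha_\mu)+n-l(\mu)\bigr)+\bigl(\Fp^{f^{(\nu)}}(\beta_\nu)+n-l(\nu)\bigr).
\]
The right-hand side equals $\Fp^{f^{(\mu)}}(\alpha_\mu)+\Fp^{f^{(\nu)}}(\beta_\nu)+2n-l(\mu)-l(\nu)$, so one needs $\Fp^{f^{(\lambda)}}(\gamma_\lambda)\le \Fp^{f^{(\mu)}}(\alpha_\mu)+\Fp^{f^{(\nu)}}(\beta_\nu)+n+l(\lambda)-l(\mu)-l(\nu)$.

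The mechanism for producing $\gamma_\lambda$ from $\alpha_\mu,\beta_\nu$ is a correspondence: the component of the product is obtained by pulling back along partial-diagonal maps among the symmetric factors and pushing forward. So I would express $\gamma_\lambda$ as $(\text{push-forward})\circ(\text{Künneth of pull-backs})$ applied to $\alpha_\mu\otimes\beta_\nu$ on $S^{(\mu)}\times S^{(\nu)}$, and then estimate the perversity using the tools of Section 2: Proposition \ref{Kunneth} controls the external product, pull-back along products of diagonals and flat maps does not increase perversity (here one uses multiplicativity of the perverse filtration for $f^{(\nu)}$, which follows from multiplicativity for $f$ on a smooth curve target — a case where the filtration is trivially controlled since $r(f)\le 1$ and the relevant products over $C$ are tautologically multiplicative), and the push-forward along the relevant proper maps increases perversity in a controlled way by $2r$ of the target, governed by Proposition \ref{graph}/Proposition \ref{diag}. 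The numerical loss $2(l(\mu)-l(\lambda))$-type terms from iterated diagonals are precisely matched against the $r(f^{(\cdot)})$ defects, and bookkeeping $r(f^{(\nu)})=\sum r(f^{(a_i)})$ with $r$ for a symmetric product of a curve-fibered surface being computable should make the inequality an equality-on-the-nose in the extremal terms.

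The main obstacle I anticipate is carefully matching the combinatorics of the cup-product formula (\cite{Britze} Theorem 32 is cited for the generalized Kummer case; for the Hilbert scheme one uses the analogous Lehn--Sorger/Li--Qin--Wang formula) with the defect-of-semismallness constants: one must verify that every term $\gamma_\lambda$ arising in the product, including the ``correction'' terms coming from the Chern classes of the tautological/boundary divisors, satisfies the perversity bound, not merely the leading term. This requires knowing that those correction classes are themselves pure of the expected perversity, which is where the triviality of $K_S$ and the surjectivity $H^*(\bar S)\to H^*(S)$ enter (to identify the boundary classes as pulled back from strata and to guarantee the cohomology is generated by controlled classes). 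Concretely I would: (i) fix an adapted basis as in Proposition \ref{dualbasis} for each $f^{(\nu)}$; (ii) write the product in that basis via the correspondence; (iii) apply Propositions \ref{Kunneth}, \ref{pushpull}, \ref{graph} term-by-term; (iv) check the numerical inequality above holds for each $\lambda$; and (v) conclude via Corollary \ref{decomp}. The length-$n$ (i.e. $\mathrm{Sym}^n H^*(S)$) part, where $l(\nu)=n$, is the base case and is handled directly by Proposition \ref{Kunneth} applied to $f^{(n)}$; the induction on $n-l$ handling the more degenerate strata is the technical heart.
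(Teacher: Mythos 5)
Your proposal follows essentially the same route as the paper's proof, which reduces the statement to the same three ingredients: the Lehn--Sorger type cup product formula for $S^{[n]}$ with $K_S$ trivial (where the surjectivity $H^*(\bar S)\to H^*(S)$ enters), the stratum-by-stratum perversity formula of Proposition \ref{4.7}/Corollary \ref{decomp}, and the diagonal/push-forward estimate of Proposition \ref{diag}. The paper simply cites the arguments of \cite[Theorems 4.18, 5.6]{Z} with these inputs, and your more detailed bookkeeping is a correct reconstruction of that argument.
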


\begin{proof}
The projective case and the five families of Hitchin system (\cite[Section 5.1]{Z}) case are treated in \cite[Theorem 4.18]{Z} and \cite[Theorem 5.6]{Z}, respectively. Their proofs are similar, which are based on the following three ingredients. 
\begin{enumerate}
    \item The cup product formula for Hilbert schemes $S^{[n]}$ with trivial canonical bundle. \cite[Theorem 3.2]{LS} for the projective case and \cite[Proposition 4.10]{Z} for the quasi-projective case.
    \item The description of the perverse filtration Proposition \ref{4.7}.
    \item The diagonal estimation \cite[Proposition 3.8]{Z} for projective case and \cite[Proposition 5.5]{Z} for the five families of Hitchin systems.
\end{enumerate}
In our generality, the cup product formula and the description of the perverse filtration still hold. The diagonal estimation is Proposition \ref{diag}.   
\end{proof}

\section{Generalized Kummer varieties for fibered surfaces}

\subsection{Fibered commutative group schemes of dim 2}
We say a morphism $f:A\to C$ of quasi-projective varieties satisfying the condition ($\dagger$) if the following holds.

($\dagger$) $f:A\to C$ is a proper surjective morphism from a connected smooth quasi-projective commutative group scheme $A$ of dimension 2 to a quasi-projective curve $C$.

The following proposition classifies the morphisms satisfying ($\dagger$).

\begin{prop} \label{str}
Let $f:A\to C$ be a morphism satisfying ($\dagger$). Then its Stein factorization  $A\xrightarrow{f'} B\xrightarrow{g} C$ belongs to one of the following cases.
\begin{enumerate}
    \item The surface $A$ is an abelian surface, $B$ is an elliptic curve, and $C$ is $\BP^1$ or an elliptic curve. The morphism $f'$ is a group homomorphism and $g$ is a finite morphism.
    \item The surface  $A=E\times \BC$, and $B=C=\BC$ where $E$ is an elliptic curve. The morphism $f'$ is the natural projection and $g$ is a finite morphism. 
    \item The surface $A=(E\times \BC^*)/\Gamma$ and $B=C=\BC^*$, where $E$ is an elliptic curve and $\Gamma$ is a finite cyclic group. The morphism $f'$ is an equivariant $\Gamma$-quotient of the natural projection $E\times \BC^*\to\BC^*$ and $g$ is a finite morphism.
    \end{enumerate}

\end{prop}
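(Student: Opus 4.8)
The plan is to analyze the structure of a two-dimensional connected smooth commutative algebraic group $A$ admitting a proper surjective map to a curve $C$, using Chevalley's structure theorem and classification of commutative algebraic groups. First I would recall that by Chevalley's theorem, $A$ fits in an exact sequence $1\to L\to A\to A_0\to 1$, where $L$ is a connected commutative linear (affine) group and $A_0$ is an abelian variety; since $\dim A=2$, the pair $(\dim L,\dim A_0)$ is one of $(0,2)$, $(1,1)$, or $(2,0)$. The linear group $L$, being commutative and connected, is isomorphic to a product $\mathbb{G}_a^r\times\mathbb{G}_m^s$. I would then treat the three cases. If $\dim A_0=2$, then $A$ is an abelian surface; the properness of $f:A\to C$ forces $C$ to be proper, and the Stein factorization together with the fact that any fibration of an abelian variety has abelian-variety fibers and base (up to translation, the fibers are subtori) gives that $f$ factors through a quotient elliptic curve $B=A/A_1$ (where $A_1$ is the connected component of a fiber), with $A\to B$ a group homomorphism and $B\to C$ finite; then $C$ is dominated by an elliptic curve, hence $C\cong\mathbb{P}^1$ or an elliptic curve, which is case (1).

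Next, if $\dim L=1$, write $L=\mathbb{G}_a$ or $L=\mathbb{G}_m$, and $A_0=E$ an elliptic curve, so $A$ is an extension of $E$ by $L$. The key point is to identify the fibration. The maximal connected affine subgroup $L\cong\mathbb{G}_a$ (resp. $\mathbb{G}_m$) is characteristic, and the quotient $A\to E$ is the Albanese-type map; but we want a map to an \emph{affine-ish} base. I would argue that a proper map from $A$ to a curve $C$ must contract $E$-directions rather than $L$-directions — more precisely, the pullback of an ample-on-its-compactification divisor, combined with the observation that $E$ (complete) cannot map non-constantly to an affine curve while $\mathbb{G}_a,\mathbb{G}_m$ can, forces $f$ to factor through a quotient of $A$ by an elliptic subgroup (a translate of $E$), leaving $B=A/E'\cong L\cong\mathbb{C}$ or $\mathbb{C}^*$. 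When the extension splits, $A=E\times\mathbb{C}$ or $E\times\mathbb{C}^*$; the $\mathbb{G}_a$-extension always splits (as $\mathrm{Ext}^1(E,\mathbb{G}_a)\ne 0$ a priori, but any such nonsplit extension is still isomorphic as a variety, and the fibration analysis reduces to the split picture — here I should be careful), while the $\mathbb{G}_m$-case $\mathrm{Ext}^1(E,\mathbb{G}_m)=\mathrm{Pic}^0(E)=E$ genuinely has nonsplit extensions, which after passing to a suitable isogeny become $(E\times\mathbb{C}^*)/\Gamma$ with $\Gamma$ finite cyclic acting diagonally (by a torsion translation on $E$ and a root-of-unity scaling on $\mathbb{C}^*$); this gives cases (2) and (3). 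Finally $\dim L=2$ gives $A=\mathbb{G}_a^2,\ \mathbb{G}_a\times\mathbb{G}_m,$ or $\mathbb{G}_m^2$, which are affine; but then I must rule these out (or fold them into the stated cases) — I expect that an affine surface has no proper surjection to a curve with positive-dimensional fibers unless… actually $\mathbb{A}^2\to\mathbb{A}^1$ is not proper, and properness of $f$ with $A$ affine and $\dim C=1$ would force the fibers to be proper affine curves, hence finite, contradicting $\dim A=2>\dim C$; so this case is vacuous.

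The main obstacle I anticipate is the bookkeeping in the $\mathbb{G}_m$-extension case: showing that after the Stein factorization $A\xrightarrow{h'}B\to C$ the intermediate $B$ is exactly $\mathbb{C}^*$ and that $A\to B$ has the claimed form $(E\times\mathbb{C}^*)/\Gamma\to\mathbb{C}^*$ with $\Gamma$ \emph{finite cyclic}. This requires understanding which nonsplit extensions of $E$ by $\mathbb{G}_m$ admit a morphism to $\mathbb{C}^*$ whose general fiber is a (twisted) copy of $E$, and pinning down the monodromy/gluing data: the extension class lies in $\widehat{E}=\mathrm{Pic}^0(E)\cong E$, and a torsion class of order $d$ yields, after pulling back along the degree-$d$ isogeny, a trivial extension, so $A$ itself is the quotient of $E\times\mathbb{C}^*$ by the resulting $\mathbb{Z}/d$-action — I would make this precise via the seesaw/theorem-of-the-cube description of line bundles on $E\times\mathbb{C}^*$ and the fact that only torsion extension classes are compatible with a proper fibration over a curve (a non-torsion class would give a base that is not a curve, or no such fibration at all). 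The routine verifications — that $A\to B$ and $B\to C$ have the stated properties (group homomorphism / equivariant quotient, respectively finite) — then follow from the Stein factorization and rigidity of morphisms between group varieties, and I would not spell these out in detail.
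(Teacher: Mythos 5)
Your proposal is essentially correct, but it organizes the argument differently from the paper. The paper starts from the Stein factorization $A\to B\to C$ and splits into cases according to whether $B$ is compact: in the compact case it rules out $B\cong\BP^1$ by a separate lemma (a surjection from an abelian surface to $\BP^1$ has disconnected generic fiber) and then invokes \cite[Proposition V.12]{Beau}; in the non-compact case it identifies $B$ with the affinization of $A$ (so $B=\BC$ or $\BC^*$ and the kernel of $A\to B$ is an elliptic curve), and then uses Chevalley's theorem only to produce an affine subgroup $X\subset A$ whose composite $X\hookrightarrow A\to B$ is a nonzero endomorphism of $\BG_a$ (hence an isomorphism, so the extension splits) or of $\BG_m$ (hence $z\mapsto z^n$, so the extension splits after the base change $z\mapsto z^n$). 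This cleverly avoids computing any $\Ext^1$ groups. You instead apply Chevalley first and classify the extensions $1\to L\to A\to E\to 1$ via $\Ext^1(E,\BG_a)\cong\BC$ and $\Ext^1(E,\BG_m)\cong\Pic^0(E)$, discarding the classes incompatible with a fibration; your treatment of the torsion $\BG_m$-classes (split after pulling back along a degree-$d$ isogeny, then quotient by $\BZ/d$) recovers case (3) exactly, your elimination of the fully affine case is correct, and your handling of case (1) via the quotient by the connected component of a fiber legitimately sidesteps the paper's $\BP^1$ lemma. Each route buys something: the paper's is shorter and more self-contained; yours makes visible exactly which extensions are being excluded and why.

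Two points to repair. First, your parenthetical that a nonsplit extension of $E$ by $\BG_a$ ``is still isomorphic as a variety'' to the split one is false: such an extension is anti-affine ($\CO(A)=\BC$), hence not isomorphic to $E\times\BC$ and, more to the point, it admits no non-constant morphism to an affine variety. Since a proper surjection from the non-proper $A$ forces $C$ to be a non-proper, hence affine, curve, these extensions admit no fibration satisfying $(\dagger)$ and simply do not occur; this anti-affineness argument --- which is the one you correctly use to exclude non-torsion $\BG_m$-classes --- should be the official reason in the $\BG_a$ case too. Second, your claim that $f$ ``must contract the $E$-directions'' presupposes an elliptic subgroup $E'\subset A$, which exists precisely in the non-anti-affine cases; once that is in place you should note that $f$ is constant on each (proper) translate of $E'$ because $C$ is affine, so $f$ factors through $A/E'\cong L$, and $A/E'\to C$ is proper and quasi-finite, hence finite. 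These are completions of steps you flagged rather than gaps in the strategy.
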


\begin{proof}
Since $f:A\to C$ is surjective and $g:B\to C$ is finite,  $B$ is a curve. 
By \cite[Tag 03H0]{Stack}, the curve $B$ is the relative normalization of $C$ in $A$, and hence is normal \cite[Tag 0BAK]{Stack}. So $B$ is a smooth curve.

When $B$ is compact, $A$ is an abelian variety. Then the genus $g(B)\le 1$. Since the generic fiber of any fibration $A\to\BP^1$ is disconnected (to be proved in Lemma \ref{fu}), $B$ must be an elliptic curve. Then $f':A\to B$ is a group homomorphism after a choice of the origin in $B$, \cite[Proposition V.12]{Beau}.

When $B$ is non-compact, it is an affine curve and $f':A\to B$ is the affinization of $A$ \cite[Section 3.2]{B} and $f'$ is a group homomorphism. So $B=\BC$ or $\BC^*$, and $A$ is an extension of $B$ with an elliptic curve $E$,
\begin{equation}\label{exact}
1\to E\to A\to B\to 1.    
\end{equation}
When $B=\BC$, by Chevalley's theorem \cite[Theorem 2]{B}, there exists a unique exact sequence of algebraic groups
\[
1\to X\xrightarrow{s} A\to Y\to 1,
\]
where $X$ is a smooth affine group scheme and $Y$ is proper. So $X=\BC$ and $Y$ is an elliptic curve. Since any non-trivial additive group endomorphism of $\BC$ (as $\BC$-varieties) is an isomorphism, the composition 
\[
\BC\xrightarrow{s} A\xrightarrow{f'}\BC
\]
is an isomorphism. Therefore the exact sequence (\ref{exact}) splits and $A=E\times\BC$.

When $B=\BC^*$, a similar argument gives a composition $\BC^*\to A\to \BC^*$. Since any non-trivial endomorphism of $\BC^*$ is given by $z\to z^n$ for some nonzero integer $n$. We may assume $n>0$; precomposing the isomorphism $z\to z^{-1}$ otherwise. The exact sequence (\ref{exact}) splits after a base change $z\to z^n$. Therefore, $f:A\to \BC^*$ is an equivariant quotient of the projection $E\times\BC^*\to \BC^*$ by a finite cyclic group $\BZ/n\BZ$. 
\end{proof}

\begin{lem}\label{fu}
Let $A$ be an abelian surface. Then any surjective morphism $f:A\to \BP^1$ has disconnected generic fiber. 
\end{lem}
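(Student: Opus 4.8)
The plan is to show that any surjective morphism $f \colon A \to \BP^1$ from an abelian surface has disconnected generic fiber by reducing it to a statement about the Albanese of the fibers. First I would take the Stein factorization $A \to B \to \BP^1$, where $A \to B$ has connected fibers and $B \to \BP^1$ is finite; the goal is to show $B \neq \BP^1$, equivalently that $A \to B$ is not already Stein, equivalently that the general fiber of $f$ itself is disconnected. The key structural input is that $A$ is an abelian variety, so it carries no nonconstant rational function with a single pole divisor of the relevant shape — more precisely, $A$ contains no rational curves, and a connected fiber of a morphism to $\BP^1$ would be a divisor $D \subset A$ with $h^0(A, \CO_A(D)) \ge 2$ and with the linear system $|D|$ base-point-free defining a morphism to $\BP^1$.

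The cleanest route I would pursue is via the Albanese / universal property of abelian varieties. Let $F$ be a smooth connected fiber of a hypothetical Stein morphism $f' \colon A \to B = \BP^1$ (after resolving, or just working with a general fiber which is smooth by generic smoothness in characteristic zero). The inclusion $F \hookrightarrow A$ and the fact that $A$ is an abelian variety means $F \to A$ factors through the Albanese $\mathrm{Alb}(F) \to A$, which is an isogeny onto an abelian subvariety. Since $\dim F = 1$, $F$ is a smooth projective curve; if $F$ has genus $0$ then $F \cong \BP^1$ maps nonconstantly into $A$, impossible as $A$ has no rational curves; if $F$ has genus $\ge 1$ then $\mathrm{Alb}(F) = \mathrm{Jac}(F)$ surjects onto a positive-dimensional abelian subvariety $A' \subseteq A$, and $F \to A'$ is finite onto its image. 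Now the family of fibers $\{F_t\}_{t \in \BP^1}$ sweeps out $A$, but they are all translates (up to isogeny) of a fixed abelian subvariety structure — two distinct fibers are disjoint, yet if $A' \subsetneq A$ the translates of $A'$ form a one-dimensional family parametrized by $A/A'$, which is an elliptic curve, not $\BP^1$; and if $A' = A$ then $\dim F = 2$, contradiction. This gives the contradiction with $B = \BP^1$.

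An alternative, perhaps more self-contained approach: suppose $f \colon A \to \BP^1$ has connected general fiber $F$, a smooth projective curve. Pulling back a point gives an effective divisor $F$ on $A$ with $F^2 = 0$ (distinct fibers are disjoint) and such that $\dim |F| \ge 1$. By the theory of abelian surfaces, a divisor with $F^2 = 0$ on an abelian surface either is numerically trivial (hence $F = 0$ or torsion, impossible since it moves) or — by the index theorem and the classification of elliptic pencils on abelian surfaces — $F$ is, up to translation and numerical equivalence, a multiple $m E$ of an elliptic curve $E \subset A$ with the projection $A \to A/E$ inducing the pencil after composing with an isogeny. But then the pencil $A \to A/E \cong$ (elliptic curve) has base $A/E$, and $f$ would factor through this elliptic curve, contradicting that $\BP^1 \to$ (elliptic curve) with $f = g \circ (\text{pencil})$ forces $g$ constant or nonexistent — in any case $\BP^1$ does not dominate an elliptic curve and the elliptic curve does not dominate $\BP^1$ with connected fibers either.

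The main obstacle I anticipate is making rigorous the step that a connected fiber $F$ of $f$, viewed as a divisor on $A$, must be (numerically, up to translation) a multiple of an elliptic curve through the origin — this requires either invoking the Hodge index theorem together with $F^2 = 0$ and $F$ effective and moving, or invoking the structure of the Albanese map of $F$. I expect the author proves it via the Albanese universal property: $F \to A$ factors through $\mathrm{Jac}(F) \to A$, whose image is an abelian subvariety $A' \subsetneq A$ (properness of $\dim$), so $f$ factors through $A \to A/A'$, an elliptic curve, and then one only needs that a dominant map from an elliptic curve to $\BP^1$ has degree $\ge 2$ (it ramifies, by Riemann–Hurwitz: $0 = 2g(E) - 2 = \deg(2g_{\BP^1}-2) + \deg R = -2\deg + \deg R$, so $\deg R = 2\deg \ge 2$ gives $\deg \ge 1$, and connectedness of the composite fiber forces $\deg = 1$, i.e. $E \cong \BP^1$, absurd). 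That last Riemann–Hurwitz bookkeeping is the only genuine computation and it is routine.
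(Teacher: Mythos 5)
Your second argument is correct and reaches the conclusion by a genuinely different route from the paper. The paper quotes \cite[Lemma 1.1]{Barth} to say that every irreducible component of every fiber is an elliptic curve and that all these components are algebraically equivalent, deduces that the number of components (with multiplicity) is constant, and then derives the contradiction \emph{cohomologically}: a connected generic fiber would force $f$ to be smooth with trivial monodromy on $R^1f_*\BQ_A$ over $\BP^1$, giving $\dim H^*(A,\BQ)=8$ instead of $16$. You instead take the generic smooth connected fiber $F$, use disjointness of fibers to get $F^2=0$, adjunction ($K_A=0$) plus the absence of rational curves to conclude $F$ is a translate of an elliptic subgroup $E\subset A$, factor $f$ through the elliptic curve $A/E$, and observe that connectedness of the fibers forces $A/E\to\BP^1$ to have degree $1$, which is absurd. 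This is more self-contained (no decomposition theorem, no appeal to Barth) and is the standard argument; the paper's version buys a statement about \emph{all} fibers and their components, which is not needed for the lemma. One caveat: in your first (Albanese) variant, the step ``if $A'=A$ then $\dim F=2$'' is false --- a genus-$2$ curve generates its Jacobian while remaining a curve --- so that case must instead be excluded exactly as in your second variant, by $F^2=0$ and adjunction forcing $g(F)=1$; with that substitution the Albanese phrasing also goes through. The concluding Riemann--Hurwitz computation is superfluous: once $f=g\circ p$ with $p:A\to A/E$ having connected fibers, the fiber of $f$ is a disjoint union of $\deg g$ translates of $E$, so connectedness directly gives $\deg g=1$ and $A/E\cong\BP^1$, a contradiction.
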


\begin{proof}
By \cite[Lemma 1.1]{Barth}, any irreducible component of any fiber is an elliptic curve,  and any such two elliptic curves  are algebraically equivalent. Since the arithmetic genus of the fibers are constant, the number of elliptic curve components on fibers are constant. If the generic fiber were connected, then the generic smoothness would imply that $f:A\to\BP^1$ would be an smooth morphism with all fibers being elliptic curves. Then the decomposition theorem 
\[
\begin{split}
Rf_*\BQ_{A}=&R^0f_*\BQ_A\bigoplus R^1f_*\BQ_A[-1]\bigoplus R^2f_*\BQ_A[-2]\\
=&\BQ_{\BP^1}\bigoplus \BQ_{\BP^1}^{\oplus 2} [-1]\bigoplus \BQ_{\BP^1}[-2]
\end{split}
\]
would imply $H^*(A,\BQ)=H^*(\BP^1,\BQ)\oplus H^*(\BP^1,\BQ)^{\oplus 2}[-1]\oplus H^*(\BP^1,\BQ)[-2]$, which is a contradiction.
\end{proof}

\begin{prop} \label{4.3}
Let $f:A\to C$ be a morphism satisfying $(\dagger)$. Then the perverse filtration associated with $f$ admits a strongly multiplicative splitting.
\end{prop}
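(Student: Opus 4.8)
The plan is to exploit the explicit classification in Proposition \ref{str}, which writes $f:A\to C$ as a composite $A\to B\to C$ with $B\to C$ finite surjective. By Proposition \ref{finite} the perverse filtration for $f$ coincides with the one for $f':A\to B$, so it suffices to produce a strongly multiplicative splitting for $f'$ in each of the three cases. In Case (1), $A$ is an abelian surface and $f':A\to B$ is a group homomorphism to an elliptic curve; in Cases (2) and (3), $A$ is $E\times\BC$ or $(E\times\BC^*)/\Gamma$ and $f'$ is (an equivariant quotient of) the projection to $\BC$ or $\BC^*$. The unifying feature is that after a finite \'etale base change the morphism becomes a product $E\times B'\to B'$, which is visibly an abelian or trivial fibration.

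First I would treat the isotrivial product case: for $p:E\times B'\to B'$ with $E$ an elliptic curve and $B'$ a smooth curve, the decomposition theorem gives $Rp_*\BQ[1]\cong \bigoplus_{i=0}^{2}H^i(E)\otimes\BQ_{B'}[1-i]$, so $r(p)=0$ (the map is smooth, hence semismall with defect $0$), the perverse filtration is trivial — $P_0H^*=H^*$ — and the trivial splitting $G_0=H^*$ is tautologically strongly multiplicative. The content is then to descend this along the finite group quotient (Case (3)) and the finite base change, using that taking invariants under a finite group is exact and compatible with cup product, together with Proposition \ref{finite} for the finite map $B\to C$. Concretely, for $A=(E\times\BC^*)/\Gamma$ one has $H^*(A,\BQ)=H^*(E\times\BC^*,\BQ)^\Gamma$ as a ring, and the (trivial) splitting on $H^*(E\times\BC^*)$ restricts to the $\Gamma$-invariants since $\Gamma$ acts compatibly with the fibration $E\times\BC^*\to\BC^*$.

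Case (1) requires slightly more care because $f':A\to B$ need not be a product, only an abelian-variety fibration: it is a homomorphism of abelian varieties with kernel an elliptic curve $E$, hence an $E$-torsor over $B$, so it is smooth and the same argument applies — $r(f')=0$ and the perverse filtration is again trivial. Thus in every case the perverse filtration associated with $f$ is concentrated in a single step, and any splitting (in particular the trivial one $G_0H^*(A,\BQ)=H^*(A,\BQ)$) is strongly multiplicative by the trivial reason that $G_0\cup G_0\subset G_0$. I would state this cleanly: since $f'$ is smooth with equidimensional fibers of dimension $1$, it is semismall with $r(f')=0$, so $P_0^{f'}H^*(A,\BQ)=H^*(A,\BQ)$, and then $P_0^fH^*(A,\BQ)=H^*(A,\BQ)$ by Proposition \ref{finite}.

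The main obstacle is not any single hard estimate but making sure the reduction is airtight: one must check that the Stein factorization produced in Proposition \ref{str} genuinely has $B\to C$ finite (so Proposition \ref{finite} applies) and that $f':A\to B$ is smooth in all three cases — in Cases (2),(3) because it is (a quotient of) a projection, and in Case (1) because a surjective homomorphism of abelian varieties is smooth (it is faithfully flat with smooth fibers, all translates of the identity component of the kernel). Once smoothness and equidimensionality of fibers are in hand, the defect of semismallness vanishes and the whole statement collapses to the triviality of the perverse filtration; no cup-product computation is actually needed. I would therefore organize the proof as: (i) reduce to $f'$ via Proposition \ref{finite}; (ii) observe $f'$ is smooth with $1$-dimensional fibers in each case of Proposition \ref{str}; (iii) conclude $r(f')=0$, the perverse filtration is trivial, and the trivial splitting is strongly multiplicative.
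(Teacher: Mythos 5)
Your reduction to the three cases of Proposition \ref{str} and the use of Proposition \ref{finite} to discard the finite map $B\to C$ are both fine and match the paper. But the core of your argument rests on a false claim: that a smooth surjective morphism $f':A\to B$ from a surface to a curve is semismall with $r(f')=0$, so that the perverse filtration is trivial. By the definition used in the paper, $r(f')=\dim(A\times_B A)-\dim A$, and for a fibration with $1$-dimensional fibers one has $\dim(A\times_B A)=\dim A+1$, hence $r(f')=1$, not $0$. (Equivalently: a semismall map is generically finite; a surface fibered over a curve never is.) The filtration is therefore concentrated in $[0,2]$ and is genuinely nontrivial. Your own displayed decomposition already shows this: in $Rp_*\BQ[1]\cong\bigoplus_{i=0}^{2}H^i(E)\otimes\BQ_{B'}[1-i]$ the summand $H^i(E)\otimes\BQ_{B'}[1-i]$ sits in perverse degree $i$ (since $\BQ_{B'}[1]$ is perverse on the curve $B'$), so $P_0\subsetneq P_1\subsetneq P_2=H^*$. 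The paper's table of perverse numbers for $A\to E$ confirms this, e.g.\ $H^1(A,\BQ)$ has a $2$-dimensional $P_0$ and a $2$-dimensional graded piece in perversity $1$. Consequently the "trivial splitting $G_0=H^*$" is not a splitting of the actual perverse filtration, and the proposal proves nothing about it.

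What is actually needed, and what the paper does, is to produce a nontrivial three-step splitting and check it is compatible with cup product. In the product cases the splitting $G_iH^*=H^i(E)\otimes H^*(B')$ (perversity equals degree along the fiber direction) does the job, and descends through the finite group quotient in case (3) as you suggest. In the abelian surface case $A\to E$, where there is no product structure, the paper chooses a basis $\alpha,\beta,\gamma,\delta$ of $H^1(A,\BQ)$ adapted to the filtration ($\alpha,\beta\in P_0$), invokes the multiplicativity of the perverse filtration for $f$ (\cite[Proposition 4.17]{Z}) to bound the perversities of the products, and then compares with the perverse Betti numbers to force equality; the resulting grading of the exterior algebra on $\alpha,\beta,\gamma,\delta$ is the strongly multiplicative splitting. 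None of this is circumvented by your argument, so the gap is essential rather than cosmetic: you would need to restore the computation $r(f')=1$ and then supply the nontrivial splitting in each case.
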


\begin{proof}
It suffices to prove for the three cases in Proposition \ref{str}. By Proposition \ref{finite}, it suffices to prove for $A\to E$, $E\times \BC\to \BC$ and $(E\times \BC^*)/\Gamma\to \BC^*/\Gamma$.

Since $A\to E$ is a smooth morphism, we have
\[
Rf_*\BQ_A=\BQ_E\bigoplus \BQ^2_E[-1]\bigoplus \BQ_E[-2],
\]
and hence the perverse numbers are
\begin{center}
    \begin{tabular}{c|c|c|c}
    $\dim$ & $P_0$ & $\textup{Gr}_1^P$ & $\textup{Gr}_2^P$\\
    \hline
    $H^0$  & 1 &0 & 0\\
    \hline
    $H^1$  & 2 & 2 & 0\\
    \hline
    $H^2$  & 1 & 4&  1\\
    \hline
    $H^3$  & 0 & 2 & 2\\
    \hline
    $H^4$  & 0 &0 & 1\\
    \end{tabular}
\end{center}
Let $\alpha,\beta\in P_0H^1(A,\BQ)$, $\gamma,\delta\in H^1(A,\BQ)$ be a basis of $H^1(A,\BQ)$ adapted to the perverse filtration. Since the perverse filtration of $f$ is multiplicative \cite[Proposition 4.17]{Z}, we have 
\begin{equation}\label{ijkl}
\Fp(\alpha\beta)=0, ~~~~\Fp(\alpha\gamma),\Fp(\alpha\delta),\Fp(\beta\gamma),\Fp(\beta\delta)\le1,~~~~ \Fp(\gamma\delta)\le 2.
\end{equation}
By the ring structure of $H^*(A,\BQ)$, they form a basis of $H^2(A,\BQ)$. Comparing with the perverse numbers, we see that the equalities of (\ref{ijkl}) must hold. They give a splitting of the perverse filtration $P_\bullet H^2(A,\BQ)$.
\[
H^2(A,\BQ)=\langle\alpha\beta\rangle\bigoplus\langle\alpha\gamma,\alpha\delta,\beta\gamma,\beta\delta\rangle\bigoplus \langle\gamma\delta\rangle.
\]
The same argument works for $H^3(A,\BQ)$ and it is straightforward to see that the induced splitting on the exterior algebra generated by $\alpha,\beta,\gamma,\delta$ is strongly multiplicative.

The perverse filtration of $E\times \BC\to \BC$ coincides with the cohomological degree, which is obviously strongly multiplicative.

Since the $\Gamma$-action on $H^*(E\times\BC^*,\BQ)$ is trivial, the perverse filtration associated with $(E\times\BC^*)/\Gamma\to \BC^*/\Gamma$ is identical to the one of $E\times \BC^*\to\BC^*$, and hence is strongly multiplicative.
\end{proof}

\subsection{Generalized Kummer varieties}
Let $X$ be a connected quasi-projective commutative group scheme. The summation map $+:X^n\to X$ descends to the symmetric product $+:X^{(n)}\to X$ and we define
\begin{equation} \label{1000}
    X^{((n))}=\ker\left(X^{(n)}\xrightarrow{+}X\right).
\end{equation}

For a given partition $\nu=1^{a_1}\cdots n^{a_n}$ of $n$, we define $X^{((\nu))}$ as the base change

\[
\begin{tikzcd}
X^{((\nu))}\arrow[d]\arrow[r,"\iota^{((\nu))}"] &X^{((n))}\arrow[d,hook]\\
X^{(\nu)}\arrow[r,"\iota^{(\nu)}"] & X^{(n)}.
\end{tikzcd}
\]

Let  
\begin{equation}\label{1002}
    X^{\nu}_0=\left\{(x_{ij})_{1\le i\le n,1\le j\le a_i}\mid x_{ij}\in X,\,\sum_{i=1}^n\sum_{j=1}^{a_i}ix_{ij}=0\right\}\subset X^{\nu},
\end{equation}
then $X^\nu_0$ admits a natural $\mathfrak{S}_\nu$-action and the quotient is $X^{((\nu))}$. Denote 
\[
\vec{x}=(x_{ij})_{1\le i\le n, 1\le j\le a_i}\in X^{\nu},
\]
and 
\[
\vec{v}=(v_{ij})_{1\le i\le n,1\le j\le a_i}\in\BZ^{\nu},
\]
where $v_{ij}=i$. Then by (\ref{1002}), we have
\[
X^{\nu}_0=\{\vec{x}\in X^{\nu}\mid\vec{v}\cdot\vec{x}=0\}.
\]
Let 
\[
\vec{v}'=\frac{1}{\gcd(\nu)}\vec{v},
\]
and $X[\gcd(\nu)]$ be the $\gcd(\nu)$-torsion points of $X$. Then $\vec{v}\cdot\vec{x}=0$ is equivalent to $\vec{v}'\cdot\vec{x}\in X[\gcd(\nu)]$, and hence
\begin{equation}\label{1234}
    X_0^{\nu}=\bigsqcup_{\sigma\in X[\gcd(\nu)]}\{\vec{x}\in X^{\nu}\mid \vec{v}'\cdot\vec{x}=\sigma\}.
\end{equation}
For later use, we denote $\tilde{X}^\nu_\sigma=\{\vec{x}\in A^{\nu}\mid \vec{v}'\cdot\vec{x}=\sigma\}$ for short.

When $\dim X=2$, we define the generalized Kummer variety $X^{[[n]]}$ by the Cartesian diagram 
\[
\begin{tikzcd}
X^{[[n]]}\arrow[r]\arrow[d,"\pi'"]& X^{[n]}\arrow[d,"\pi"]\\
X^{((n))}\arrow[r]& X^{(n)}.
\end{tikzcd}
\]

Let $f:A\to C$ be a morphism satisfying ($\dagger$). Then by Proposition \ref{str}, $f$ factors as
\[
A\xrightarrow{f'}B\xrightarrow{g}C
\]
where $B$ is a smooth group scheme of dimension 1, $f'$ is a homomorphism of algebraic groups and $g$ is a finite morphism. Then there is a natural map
\[
f'^{((n))}:A^{((n))}\to B^{((n))}.
\]
By abuse of notation, denote by $C^{((n))}$ the image of the composition
\[
A^{((n))}\hookrightarrow A^{(n)}\xrightarrow{f^{(n)}} C^{(n)},
\]
and denote
\[
f^{((n))}:A^{((n))}\xrightarrow{f'^{((n))}} B^{((n))}\xrightarrow{g^{((n))}}C^{((n))}.
\]

\begin{prop}\label{dim}
Let $f:A\to C$ be a morphism satisfying ($\dagger$). Then $g^{((n))}:B^{((n))}\to C^{((n))}$ is a finite surjective morphism. In particular, $\dim C^{((n))}=n-1$ and the perverse filtration associated with $h':A^{[[n]]}\to C^{((n))}$ is identical to the one associated with $h'_B:A^{[[n]]}\to B^{((n))}$.
\end{prop}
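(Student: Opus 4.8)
The plan is to first establish that $g^{((n))}$ is finite and surjective, then deduce the dimension count and the comparison of perverse filtrations. For finiteness: recall from Proposition \ref{str} that $g\colon B\to C$ is a finite morphism, so the induced map $g^{(n)}\colon B^{(n)}\to C^{(n)}$ on symmetric products is finite as well (a finite morphism stays finite after taking $n$-fold products and descending to the symmetric quotient, since finiteness is preserved by products of varieties and by taking quotients by finite group actions in the context of affine morphisms). The map $g^{((n))}$ is the restriction of $g^{(n)}$ to the closed subvariety $B^{((n))}\subset B^{(n)}$; moreover, by definition $C^{((n))}$ is the image of $A^{((n))}$, equivalently of $B^{((n))}$, under $g^{(n)}$, so $g^{((n))}$ is surjective onto $C^{((n))}$ by construction. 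A restriction of a finite morphism to a closed subscheme (followed by corestriction to the scheme-theoretic image) is again finite, so $g^{((n))}\colon B^{((n))}\to C^{((n))}$ is finite and surjective.

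Next, for the dimension claim, I would compute $\dim B^{((n))}$. Since $B$ is a smooth connected commutative group scheme of dimension $1$, the summation map $m\colon B^{(n)}\to B$ is surjective (it is already surjective on each coordinate) with every fiber of pure dimension $n-1$; indeed $B^{(n)}$ has dimension $n$ and $m$ is faithfully flat (a dominant morphism from an irreducible variety to a smooth curve that is, fiberwise, a translate of the fixed subvariety $\{\sum x_i = 0\}$), so $B^{((n))} = m^{-1}(0)$ has dimension $n-1$. As a finite surjective morphism preserves dimension, $\dim C^{((n))} = \dim B^{((n))} = n-1$.

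Finally, for the coincidence of perverse filtrations: this is an immediate application of Proposition \ref{finite}. We have the proper morphism $h'_B\colon A^{[[n]]}\to B^{((n))}$ and the finite surjective morphism $g^{((n))}\colon B^{((n))}\to C^{((n))}$, whose composition is $h' = h'_B$ composed with $g^{((n))}$, i.e.\ the morphism $A^{[[n]]}\to C^{((n))}$. Proposition \ref{finite} then gives $P^{h'_B}_k H^*(A^{[[n]]},\BQ) = P^{h'}_k H^*(A^{[[n]]},\BQ)$ for all $k$. The only mild point of care is that $h'_B$ is indeed proper and $A^{[[n]]}$, $B^{((n))}$ are of the expected type; properness of $h'_B$ follows since $A^{[[n]]}\to A^{((n))}$ is proper (base change of the proper Hilbert–Chow morphism) and $A^{((n))}\to B^{((n))}$ is proper (it is the restriction to fibers over $0$ of the proper morphism $f'^{(n)}\colon A^{(n)}\to B^{(n)}$, using that $f'\colon A\to B$ is proper).

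The main obstacle, such as it is, is purely bookkeeping: verifying that finiteness and the dimension-$(n-1)$ statement genuinely descend through the symmetric-product and subscheme constructions, and that all the morphisms in sight are proper so that Proposition \ref{finite} applies verbatim. No new idea is needed beyond Propositions \ref{str} and \ref{finite}.
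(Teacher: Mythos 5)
Your proposal is correct and follows essentially the same route as the paper: finiteness of $g^{(n)}$ restricted to the closed subvariety $B^{((n))}$ gives finiteness and surjectivity of $g^{((n))}$ onto the image $C^{((n))}$, the dimension count comes from $B^{((n))}$ being a fiber of the summation map $B^{(n)}\to B$, and the filtration comparison is Proposition \ref{finite}. Your extra verifications (surjectivity of $A^{((n))}\to B^{((n))}$, properness of $h'_B$) are just added care on points the paper leaves implicit.
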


\begin{proof}
Since $B\to C$ is finite, the composition 
\[
B^{((n))}\hookrightarrow B^{(n)}\xrightarrow{g^{(n)}} C^{(n)}
\]
is finite. By definition, $g^{((n))}$ is the morphism onto its image and hence is finite surjective. By (\ref{1000}), $\dim B^{((n))}=n-1$. So $\dim C^{((n))}=n-1$. The identification of perverse filtration follows from Proposition \ref{finite}.
\end{proof}

Using the notations introduced above, we have the commutative diagram

\[
\begin{tikzcd}
& &A^{[n]}\arrow[d,"\pi"]& \\
A^\nu\arrow[r,"/\mathfrak{S}_\nu"]\arrow[d,swap,"f'^\nu"]&A^{(\nu)}\arrow[r,"\iota^{(\nu)}"]&A^{(n)}&A^{[[n]]}\arrow[d,swap,"\pi'"]\arrow[dd,bend left,"h'_B"]\arrow[lu]\\
B^\nu&A^{\nu}_0\arrow[r,"/\mathfrak{S}_{\nu}"]\arrow[d,"f'^\nu_0"]\arrow[lu]&A^{((\nu))}\arrow[d,"f'^{((\nu))}"]\arrow[r,"\iota^{((\nu))}_A"]\arrow[lu]&A^{((n))}\arrow[d,swap,"f'^{((n))}"]\arrow[lu]\\
&B^{\nu}_0\arrow[r,"/\mathfrak{S}_{\nu}"]\arrow[lu]& B^{((\nu))}\arrow[r,"\iota^{((\nu))}_B"]&B^{((n))}.
\end{tikzcd}
\]

By the proper base change theorem and Theorem \ref{hilb}, we have
\begin{equation}\label{1003}
\begin{split}
Rh'_{B*}\BQ_{A^{[[n]]}}[2n]=&Rf'^{((n))}_*\bigoplus_\nu\iota_{A*}^{((\nu))}\BQ_{A^{((\nu))}}[2l(\nu)]\\
=&\bigoplus_\nu \iota_{B*}^{((\nu))}Rf'^{((\nu))}_*\BQ_{A^{((\nu))}}[2l(\nu)]\\
=&\bigoplus_\nu \iota_{B*}^{((\nu))}(Rf'^{\nu}_{0*}\BQ_{A^\nu_0})^{\mathfrak{S}_\nu}[2l(\nu)]
\end{split}
\end{equation}

Since all $\iota$'s involved are $t$-exact, to study the perverse filtration associated with $h'_B$, it suffices to study the perverse filtration associated with $f'^{\nu}_0:A^\nu_0\to B^\nu_0$ together with its $\mathfrak{S}_\nu$-action for all partition $\nu$ of $n$.

\subsection{Topology of the morphism $f'^\nu_0$}
In this section, we will calculate the perverse filtration associated with $f'^\nu_0$ and describe the $\mathfrak{S}_\nu$-action on it. We first calculate the Betti numbers of $A^\nu_0$.

\begin{prop} \label{non-can}
Let $\nu$ be a partition of $n$. There exists a non-canonical isomorphism 
\[
A^\nu_0\cong A[\gcd(\nu)]\times A^{l(\nu)-1},
\]
In particular, there is an isomorphism of cohomology groups
\[
H^*(A^\nu_0,\BQ)\cong \bigoplus_{\sigma\in A[\gcd(\nu)]} H^*(A,\BQ)^{\otimes l(\nu)-1}. 
\]
and 
\begin{equation}\label{1253}
\dim H^*(A^\nu_0,\BQ)=|A[\gcd(\nu)]|\cdot (\dim H^*(A,\BQ))^{l(\nu)-1}.
\end{equation}

\end{prop}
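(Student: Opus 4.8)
The plan is to exhibit a concrete isomorphism $A^{\nu}_0 \cong A[\gcd(\nu)] \times A^{l(\nu)-1}$ of varieties (up to connected components) and then pass to cohomology. The decomposition (\ref{1234}) already splits $A^{\nu}_0$ into the disjoint pieces $\tilde{A}^{\nu}_{\sigma}$ indexed by $\sigma \in A[\gcd(\nu)]$, so it suffices to show each $\tilde{A}^{\nu}_{\sigma}$ is isomorphic to $A^{l(\nu)-1}$. Write $l = l(\nu)$ and relabel the coordinates of $A^{\nu}$ as $x_1,\dots,x_l$ with corresponding weights $w_1,\dots,w_l$ (the entries of $\vec{v}'$), so that $\gcd(w_1,\dots,w_l)=1$. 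The set $\tilde{A}^{\nu}_{\sigma}$ is the fiber over $\sigma$ of the group homomorphism $\phi: A^{l}\to A$, $\phi(x_1,\dots,x_l)=\sum_i w_i x_i$. Since $\gcd(w_i)=1$, there are integers $c_i$ with $\sum_i c_i w_i = 1$; the map $A^{l}\to A^{l-1}\times A$ sending $(x_i)$ to $(x_i - c_i\phi(\vec x))_{i<l}$ together with $\phi(\vec x)$... more cleanly, one checks directly that $\phi$ is a surjective homomorphism of smooth connected group schemes of relative dimension $2(l-1)$, hence all its fibers are isomorphic (as $A$-torsors under translation by $\ker\phi$, which being a subgroup scheme of $A^l$ surjecting appropriately is itself smooth); and $\ker\phi \cong A^{l-1}$ because one can use the unimodular row $(w_i)$ — it extends to an element of $GL_l(\BZ)$ (or at least to a matrix over $\BZ$ with that row and determinant $1$) — to produce a group automorphism of $A^{l}$ carrying $\ker\phi$ onto $A^{l-1}\times\{0\}$. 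Thus $\tilde{A}^{\nu}_{\sigma}$, being a torsor under $\ker\phi\cong A^{l-1}$ with a rational point (e.g. $\sigma$ lifts to $(c_1\sigma,\dots,c_l\sigma)$ after scaling, or simply any preimage since $\phi$ is surjective), is isomorphic to $A^{l-1}$.

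Having fixed such an isomorphism $\tilde{A}^{\nu}_{\sigma}\cong A^{l-1}$ for each $\sigma$, I would take cohomology and apply the Künneth formula to each component:
\[
H^*(A^{\nu}_0,\BQ) \;=\; \bigoplus_{\sigma\in A[\gcd(\nu)]} H^*(\tilde{A}^{\nu}_{\sigma},\BQ) \;\cong\; \bigoplus_{\sigma\in A[\gcd(\nu)]} H^*(A,\BQ)^{\otimes (l(\nu)-1)}.
\]
The dimension count (\ref{1253}) is then immediate from $\dim H^*(A^{l-1},\BQ) = (\dim H^*(A,\BQ))^{l-1}$, using that $|A[\gcd(\nu)]|$ is the number of connected components appearing in (\ref{1234}) — here I would invoke that $A$ is one of the three types in Proposition \ref{str}, so $A[\gcd(\nu)]$ is indeed finite (for $A$ an abelian surface this is $(\gcd\nu)^4$ points; for $E\times\BC$ it is $E[\gcd\nu]$; for the $(E\times\BC^*)/\Gamma$ case one reduces via the finite quotient), and in each case $A^{l-1}$ has a well-defined Künneth decomposition.

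The main obstacle is the algebraic-group bookkeeping in the first paragraph: making precise that $\ker\phi\cong A^{l-1}$ and that $\phi$ is surjective with all fibers abstractly isomorphic, uniformly across the three families of Proposition \ref{str} (in particular when $A$ is non-compact and $A[\gcd\nu]$ must be interpreted correctly — e.g. $\BC[n]=0$ but $\BC^*[n]=\mu_n$). I expect this to be handled by the unimodular-row argument, which works for any commutative group scheme: a primitive integer vector $(w_1,\dots,w_l)$ is the first row of a matrix $M\in GL_l(\BZ)$, and $M$ acts as a group automorphism of $A^{l}$ transforming $\phi$ into the first projection $A^{l}\to A$, whose kernel is literally $\{0\}\times A^{l-1}$ and whose fibers are literally copies of $A^{l-1}$. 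The only subtlety is that the isomorphism is non-canonical — depending on the choice of $M$ and of basepoints in each fiber — which is exactly why the proposition is stated with "non-canonical"; the canonical structure (compatible with $\mathfrak{S}_{\nu}$ and the cup product) is recovered only later by passing to $f'^{\nu}_0\times f'$ as the surrounding text indicates.
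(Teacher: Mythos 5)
Your proposal is correct and follows essentially the same route as the paper: both arguments extend the primitive vector $\vec{v}'$ to a matrix $M\in GL_{l(\nu)}(\BZ)$, use the induced group automorphism of $A^{\nu}$ to carry $A^{\nu}_0$ onto $A[\gcd(\nu)]\times A^{l(\nu)-1}$, and conclude by the K\"unneth formula. The extra discussion of torsors and kernels in your first paragraph is subsumed by the unimodular-row argument you settle on, which is exactly the paper's proof.
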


\begin{proof}
We follow the notation in Section 4.2. Since $\vec{v}'$ is primitive in the lattice $\BZ^{\nu}$, we may extend $\vec{v}'$ to be a basis of $\BZ^{\nu}$. Equivalently, there is an invertible $l(\nu)\times l(\nu)$ matrix $M$ with integer entries whose first row is $\vec{v}'$. We define $M:A^{\nu}\to A^{\nu}$ formally by the rule of linear transformations. Then $\vec{v}'\cdot \vec{x}=\sigma$ for a $\gcd(\nu)$-torsion point $\sigma$ if and only if the first entry of $M(\vec{x})$ is $\sigma$. Therefore, $M$ maps $A^{\nu}_0$ isomorphically to $A[\gcd(\nu)]\times A^{l(\nu)-1}$, \emph{i.e.}
\[
\begin{tikzcd}
A^{\nu}_0\arrow[d,hook]\arrow[r,"M"]&A[\gcd(\nu)]\times A^{l(\nu)-1}\arrow[d,hook]\\
A^{\nu}\arrow[r,"M"]&A^{\nu}.
\end{tikzcd}
\]
By K\"unneth formula,
\[
H^*(A^\nu_0,\BQ)\cong \bigoplus_{\sigma\in A[\gcd(\nu)]} H^*(A,\BQ)^{\otimes l(\nu)-1}.
\] 
\end{proof}

The $\mathfrak{S}_\nu$-action on the right side is obscure, so it is difficult to descent directly to the perverse filtration of $f'^{((\nu))}:A^{((\nu))}\to B^{((\nu))}$. Therefore, we switch to analysis the product of morphisms
\[
f'^\nu_0\times f:A^\nu_0\times A\to B^\nu_0\times B,
\]
which behaves better with respect to the $\mathfrak{S}_\nu$-action. Consider the diagram
\[
\begin{tikzcd}
A^\nu_0\times A\arrow[r,"\Sigma"]\arrow[d,"f'^\nu_0\times f"]& A^{\nu}\arrow[d,"f'^{\nu}"]\\
B^\nu_0\times B\arrow[r,"\Sigma"]&B^{\nu},
\end{tikzcd}
\]
where $\Sigma(\vec{x},a)=(x_{ij}+a)_{1\le i\le n,1\le j\le a_i}$. Then $\Sigma$ is an $\mathfrak{S}_\nu$-equivariant \'etale covering. 

\begin{prop} \label{aaa}
Let $f':A\to B$ be the morphism obtained in Proposition \ref{str}. Then there is a canonical $\mathfrak{S}_\nu$-equivariant isomorphism
\[
H^*(A_0^\nu\times A,\BQ)=\bigoplus_{\sigma\in A[\gcd(\nu)]} H^*(A^{\nu},\BQ)
\]
where $\mathfrak{S}_\nu$ acts on the factor $H^*(A_0^\nu,\BQ)$ on the left side. Under this isomorphism, we have
\[
    P_k^{f'^{\nu}_0\times f'}H^*(A_0^\nu\times A,\BQ)=\bigoplus_{\sigma\in A[\gcd(\nu)]} P^{f'^{\nu}}_kH^*(A^{\nu},\BQ).
\]
In particular, we have 
\[
    P_k^{f'^{((\nu))}\times f'}H^*(A^{((\nu))}\times A,\BQ)=\bigoplus_{\sigma\in A[\gcd(\nu)]} P^{f'^{(\nu)}}_kH^*(A^{(\nu)},\BQ).
\]
\end{prop}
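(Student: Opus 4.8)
The plan is to bootstrap from the non-canonical isomorphism of Proposition \ref{non-can} to a canonical, $\mathfrak{S}_\nu$-equivariant one by adding the extra factor $A$, which supplies exactly the ``missing'' $\gcd(\nu)$-torsion coordinate. First I would observe that there is a natural finite \'etale cover: the map $A^\nu \to A$ sending $\vec{x}\mapsto \vec{v}'\cdot\vec{x}$ is a surjective group homomorphism with kernel $A^{\nu}_0 \cap \{\vec{v}'\cdot\vec{x}=0\}$, and $A^\nu_0$ is its preimage of the finite subgroup $A[\gcd(\nu)]\subset A$. The key trick is to consider instead the fiber product $A^\nu_0 \times A$ together with the multiplication-by-$\gcd(\nu)$ map on the second factor: the assignment $(\vec{x}, a)\mapsto (\vec{x}, \vec{v}'\cdot\vec{x} - \gcd(\nu)\cdot a)$ is not quite right, so more carefully I would use the isogeny $A^\nu \times A \to A^\nu \times A$ that is the identity on all $A^\nu$-coordinates except it replaces one coordinate, using $\vec{v}'$ and the extra $A$, by a change of basis in the lattice $\BZ^\nu\oplus\BZ$. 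Concretely, extend $\vec{v}'\in\BZ^\nu$ and the element $\gcd(\nu)$ of the last $\BZ$-summand: the pair $(\vec{v}', -\gcd(\nu))\in\BZ^{\nu}\oplus\BZ$ together with $(0,1)$ can be completed to a basis of $\BZ^{\nu}\oplus\BZ$, giving an \emph{isomorphism} (not just isogeny) $\phi: A^\nu \times A \xrightarrow{\sim} A^\nu\times A$ under which the subvariety $\{\vec{v}'\cdot\vec{x} = \gcd(\nu)\cdot a\}$ maps isomorphically onto $A^\nu \times \{0\}\cong A^{\nu}$. Since $A^\nu_0\times A = \bigsqcup_{\sigma\in A[\gcd(\nu)]}\{\vec{v}'\cdot\vec{x}=\sigma\}\times A$ and each such piece is a translate of $\{\vec{v}'\cdot\vec{x}=\gcd(\nu)\cdot a\}$ (using surjectivity of $\gcd(\nu):A\to A$), each component is isomorphic to $A^\nu$, yielding the desired decomposition indexed by $A[\gcd(\nu)]$.

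For canonicity and $\mathfrak{S}_\nu$-equivariance, I would package this more invariantly: the connected component of $A^\nu_0\times A$ containing the identity is precisely $\ker(A^\nu\times A \to A,\ (\vec{x},a)\mapsto \vec{v}\cdot\vec{x} - \gcd(\nu)^2 a)$ — one should check the right coefficient so that the component group is trivial — and this kernel, being an abelian subvariety complemented inside $A^\nu\times A$ via the projection to a chosen complementary coordinate, is canonically isomorphic to $A^\nu$ \emph{once we use the last $A$-factor, not an arbitrary coordinate of $A^\nu$, to split it off}. The point is that $\mathfrak{S}_\nu$ permutes only the $A^\nu$-coordinates and fixes $\vec{v}$ (equivalently $\vec v'$) because $\vec v$ is $\mathfrak S_\nu$-invariant by construction (the entries $v_{ij}=i$ depend only on the block $i$, which $\mathfrak S_{a_i}$ preserves); hence the splitting that uses the auxiliary $A$-factor is $\mathfrak{S}_\nu$-equivariant, whereas Proposition \ref{non-can}'s splitting used a coordinate of $A^\nu$ and thus destroyed equivariance. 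On cohomology this gives the canonical $\mathfrak{S}_\nu$-equivariant isomorphism $H^*(A^\nu_0\times A,\BQ)\cong\bigoplus_{\sigma\in A[\gcd(\nu)]} H^*(A^\nu,\BQ)$, with the torsion index coming from $\pi_0$.

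Next I would upgrade this to a statement about perverse filtrations. The isomorphism $\phi$ above, restricted to the relevant subvarieties, fits into a commutative square with the maps to $B$: on the $A^\nu_0\times A$ side we have $f'^\nu_0\times \mathrm{id}_A$ landing in $B^\nu_0\times\{pt\}$ — actually I should use $f'^\nu_0$ composed with, or together with, $f'$ on the last factor so that the target is $B^\nu$ — and on the $A^\nu$ side we have $f'^\nu:A^\nu\to B^\nu$. The key compatibility is that $f':A\to B$ is a group homomorphism (Proposition \ref{str}), so the linear change of coordinates $\phi$ intertwines $f'^\nu\times f'$ with itself up to the analogous linear change on $B^\nu\times B$; concretely the square
\[
\begin{tikzcd}
A^\nu_0\times A \arrow[r,"\phi"]\arrow[d] & \bigsqcup_\sigma A^\nu \arrow[d]\\
B^\nu_0\times B \arrow[r,"\bar\phi"] & \bigsqcup_\tau B^\nu
\end{tikzcd}
\]
commutes, where $\bar\phi$ is the corresponding isomorphism on the base. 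Since the formation of the perverse filtration is preserved by isomorphisms of morphisms, and since on each component we are looking at $f'^\nu$ (up to translation by a fixed point, which acts trivially on the perverse filtration as translations act trivially on cohomology of abelian varieties and translations are isomorphisms over the base after translating $B$ too), I would conclude
\[
P_k^{f'^\nu_0}H^*(A^\nu_0\times A,\BQ)=\bigoplus_{\sigma\in A[\gcd(\nu)]} P_k^{f'^\nu}H^*(A^\nu,\BQ).
\]
Finally, taking $\mathfrak{S}_\nu$-invariants on both sides — which is exact and commutes with the perverse filtration because $\mathfrak{S}_\nu$ acts by automorphisms over the base — and using $H^*(A^{(\nu)}_0\times A)=H^*(A^\nu_0\times A)^{\mathfrak{S}_\nu}$ together with the description of $P^{f'^{(\nu)}}$ from Section 3, gives the ``in particular'' statement for $f'^{(\nu)}_0$.

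\textbf{Main obstacle.} The genuinely delicate point is getting canonicity together with $\mathfrak{S}_\nu$-equivariance \emph{simultaneously}: the naive splitting in Proposition \ref{non-can} completes $\vec{v}'$ to a basis of $\BZ^\nu$ by choosing an auxiliary lattice vector, and no such choice can be made $\mathfrak{S}_\nu$-equivariantly in general (there is no $\mathfrak S_\nu$-invariant complement to $\BZ\vec v'$ inside $\BZ^\nu$ when, say, $\nu=1^n$, since that would be an invariant hyperplane in $\BZ^n$ not containing the all-ones vector, which exists rationally but the integral structure and the torsion bookkeeping must be tracked carefully). Introducing the extra $A$-factor is exactly what resolves this: the vector $(0,\dots,0,1)$ in $\BZ^\nu\oplus\BZ$ is manifestly $\mathfrak{S}_\nu$-fixed and together with the $\mathfrak{S}_\nu$-fixed vector $\vec v'$ (or the pair $(\vec v', -\gcd\nu)$) one can span a $\mathfrak S_\nu$-invariant rank-$2$ primitive sublattice whose saturation behaves well, and the quotient lattice carries the permutation action making everything equivariant. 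Verifying that this primitive-sublattice/quotient bookkeeping produces precisely the index set $A[\gcd(\nu)]$ for the components, with the correct (trivial) $\mathfrak S_\nu$-action on that index set, is where I expect to spend the most care; the passage to perverse filtrations afterward is formal given that $f'$ is a homomorphism and translations act trivially.
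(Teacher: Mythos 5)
There is a genuine gap: the mechanism you propose for the isomorphism --- an honest $\mathfrak{S}_\nu$-equivariant isomorphism of \emph{varieties} between $A^\nu_0\times A$ and $\bigsqcup_{\sigma\in A[\gcd(\nu)]}A^{\nu}$ obtained from a change of basis of $\BZ^{\nu}\oplus\BZ$ --- does not exist, and several intermediate claims are false. The subgroup $K=\{\vec{v}'\cdot\vec{x}=\gcd(\nu)\,a\}$ and the subgroup $\tilde{A}^\nu_0\times A=\{\vec{v}'\cdot\vec{x}=0\}$ are two \emph{distinct} subgroups through the origin, so the components $\tilde{A}^\nu_\sigma\times A$ of $A^\nu_0\times A$ are not translates of $K$: the translates of $K$ are the fibers of $(\vec{x},a)\mapsto\vec{v}'\cdot\vec{x}-\gcd(\nu)a$, and each $\tilde{A}^\nu_\sigma\times A$ meets every one of them. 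Likewise the identity component of $A^\nu_0\times A$ is $\{\vec{v}'\cdot\vec{x}=0\}$, not the kernel of $\vec{v}\cdot\vec{x}-\gcd(\nu)^2a$ (that kernel contains points with $\vec{v}\cdot\vec{x}\ne 0$, which do not lie in $A^\nu_0\times A$ at all). More fundamentally, no equivariant isomorphism of varieties can exist in general: already for $\nu=1^2$, an $\mathfrak{S}_2$-equivariant homomorphism $A^2_0\times A\to A^2$ must have the form $(x,a)\mapsto(f(x)+g(a),-f(x)+g(a))$, which factors through $(u,v)\mapsto(u+v,v-u)$ of degree $|A[2]|=16$, so it is never an isomorphism. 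The proposition is an isomorphism of cohomology groups only, and it is not induced by a map of varieties in the direction you want; correspondingly, your final paragraph's hope that adjoining the $\mathfrak{S}_\nu$-fixed vector $(0,1)$ produces an invariant integral complement does not materialize, and the passage to perverse filtrations via the square involving $\phi$ and $\bar\phi$ collapses with it.

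The missing idea is to run the argument through the finite \'etale, canonical, $\mathfrak{S}_\nu$-equivariant map $\Sigma:A^\nu_0\times A\to A^{\nu}$, $(\vec{x},a)\mapsto(x_{ij}+a)$, going the \emph{other} way --- you essentially wrote down this cover at the start and then abandoned it. On each component it restricts to $\Sigma_\sigma:\tilde{A}^\nu_\sigma\times A\to A^{\nu}$, finite \'etale of degree $|A[n/\gcd(\nu)]|$, so $\BQ_{A^{\nu}}$ is a canonical direct summand of $\Sigma_{\sigma*}\BQ$ (adjunction unit split by the trace), i.e.\ $\Sigma_\sigma^*$ is a canonical split injection on cohomology; the dimension count from the non-canonical isomorphism of Proposition \ref{non-can} then forces the complementary summands to have vanishing hypercohomology, so $\Sigma_\sigma^*$ is an isomorphism. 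Canonicity and equivariance are automatic because $\Sigma$ is canonical and equivariant. The perverse statement then follows because the corresponding map $B^\nu_0\times B\to B^{\nu}$ on the base is finite, hence $t$-exact, so pushing forward commutes with $^\Fp\tau_{\le k}$ summand by summand; no identification of the two bases via an isomorphism $\bar\phi$ is needed (and none exists). The part of your write-up that survives is the identification of the component decomposition (\ref{1234}) and the observation that $\vec{v}$ is $\mathfrak{S}_\nu$-invariant; the resolution of the canonicity/equivariance tension has to be cohomological (split injection plus dimension count), not geometric.
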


\begin{proof}
By (\ref{1234}), the map $\Sigma$ is a disjoint union of \'etale coverings $\Sigma_\sigma:\tilde{A}_\sigma^\nu\times A\to A^{\nu}$ for all $\sigma\in A[\gcd(\nu)]$. We have
\begin{equation} \label{1234.5}
    \Sigma_*\BQ_{A^\nu_0\times A}=\bigoplus_{\sigma\in A[\gcd(\nu)]} \Sigma_{\sigma*} \BQ_{\tilde{A}_\sigma^\nu\times A}=\bigoplus_{\sigma\in A[\gcd(\nu)]}(\BQ_{A^{\nu}}\oplus F_\sigma),
\end{equation}
for some sheaves $F_\sigma$. By taking the hyper-cohomology, we have
\begin{equation}\label{1235}
    H^*(A_0^\nu\times A,\BQ)=\bigoplus_{\sigma\in A[\gcd(\nu)]} H^*(A^{\nu},\BQ)\oplus\BH(F_\sigma).
\end{equation}

The summands $H^*(A^\nu,\BQ)$ is canonical since $\BQ_{A^\nu}$ is canonical in the decomposition (\ref{1234.5}). Proposition \ref{non-can} and K\"unneth formula yields an identification of Betti numbers
\begin{equation} \label{1236}
   \dim H^*(A^\nu_0\times A,\BQ)= |A[\gcd(\nu)]|\cdot \dim H^*(A,\BQ)^{\otimes l(\nu)}. 
\end{equation}
Comparing \ref{1235} and (\ref{1236}), we see that $\BH(F_\sigma)=0$ for all $\sigma$. Now we use the definition (\ref{01}) to calculate the perverse filtration associated with $f'^{\nu}_0$. 
Since $\Sigma$ is finite, and hence is $t$-exact, applying $^{\Fp}\tau_{\le k}Rf'^{\nu}_*$ to (\ref{1234.5}) yields
\[
\Sigma_* {^\mathfrak{\Fp}}\tau_{\le k}R(f'^\nu_0\times f)_*\BQ_{A^\nu_0\times A}=\bigoplus_{\sigma\in A[\gcd(\nu)]}\left({^\mathfrak{\Fp}}\tau_{\le k}Rf'^{\nu}_*\BQ_{A^{\nu}}\oplus {^\mathfrak{\Fp}}\tau_{\le k}Rf'^{\nu}_*F_\sigma\right).
\]
The hypercohomology yields
\[
\begin{tikzcd}
\BH({^\mathfrak{\Fp}}\tau_{\le k}R(f'^\nu_0\times f)_*\BQ_{A^\nu_0\times A})\arrow[r,equal]\arrow[d]&
\displaystyle\bigoplus_{\sigma\in A[\gcd(\nu)]}\BH\left({^\mathfrak{\Fp}}\tau_{\le k}Rf'^{\nu}_*\BQ_{A^{\nu}}\oplus {^\mathfrak{\Fp}}\tau_{\le k}Rf'^{\nu}_*F_\sigma\right)\arrow[d]\\
\BH(R(f'^\nu_0\times f)_*\BQ_{A^\nu_0\times A})\arrow[r,equal]\arrow[d,equal]&
\displaystyle\bigoplus_{\sigma\in A[\gcd(\nu)]}\BH\left(Rf'^{\nu}_*\BQ_{A^{\nu}}\oplus Rf'^{\nu}_*F_\sigma\right)\arrow[d,equal]\\
H^*(A^\nu_0\times A,\BQ)\arrow[r,equal]& \displaystyle\bigoplus_{\sigma\in A[\gcd(\nu)]}H^*(A^\nu,\BQ).
\end{tikzcd}
\]
By the definition of perverse filtrations (\ref{01}), we have an isomorphism
\begin{equation}\label{1237}
    P_k^{f'^\nu_0\times f'}H^*(A_0^\nu\times A,\BQ)=\bigoplus_{\sigma\in A[\gcd(\nu)]} P_k^{f'^\nu}H^*(A^{\nu},\BQ).
\end{equation}
The isomorphism (\ref{1237}) is $\mathfrak{S}_\nu$-equivariant since $\Sigma_\sigma$ is for all $\sigma\in A[\gcd(\nu)]$, and is canonical since the summmands
\[
H^*(A^{\nu},\BQ)\to H^*(A_0^\nu\times A,\BQ)
\]
are canonical for all $\sigma$ by the decomposition theorem.
\end{proof}

\subsection{Perverse filtration and multiplicativity}

\begin{thm} \label{thm}
Let $f:A\to C$ be a morphism satisfying $(\dagger)$. Let $h':A^{[[n]]}\to C^{((n))}$ be the induced morphism. Then there is a canonical decomposition
\[
H^*(A^{[[n]]}\times A,\BQ)=\bigoplus_\nu\bigoplus_{\sigma \in A[\gcd(\nu)]} H^*(A^{(\nu)},\BQ)[2l(\nu)-2n].
\]
Under this identification, we have
\[
P_k^{h'\times f} H^*(A^{[[n]]}\times A,\BQ)=\bigoplus_\nu\bigoplus_{\sigma \in A[\gcd(\nu)]} P_{k+l(\nu)-n}^{f^{(\nu)}} H^*(A^{(\nu)},\BQ)[2l(\nu)-2n].
\]
\end{thm}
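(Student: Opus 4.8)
The plan is to tensor the sheaf‑level decomposition (\ref{1003}) with $Rf'_*\BQ_A$ and to feed in Proposition \ref{aaa} one summand at a time, keeping track of the $\mathfrak{S}_\nu$‑action and of the shift in perverse degree. First I would replace $h'\times f$ by $h'_B\times f'$: since $h'=g^{((n))}\circ h'_B$ and $f=g\circ f'$ we have $h'\times f=(g^{((n))}\times g)\circ(h'_B\times f')$ with $g^{((n))}\times g$ finite surjective by Proposition \ref{dim}, so $P^{h'\times f}_\bullet=P^{h'_B\times f'}_\bullet$ by Proposition \ref{finite}; for the same reason $P^{f^{(\nu)}}_\bullet=P^{f'^{(\nu)}}_\bullet$, so the right-hand side of the theorem may be read with $f'^{(\nu)}$ in place of $f^{(\nu)}$.

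Next, apply $(-)\boxtimes Rf'_*\BQ_A[2]$ to (\ref{1003}). Because $\mathfrak{S}_\nu$ acts trivially on the extra $A$-factor it commutes with this operation, and because each $\iota^{((\nu))}_B\times\textup{id}$ is a closed embedding, hence $t$-exact, the K\"unneth formula yields
\[
R(h'_B\times f')_*\BQ_{A^{[[n]]}\times A}[2n+2]=\bigoplus_{\nu\vdash n}(\iota^{((\nu))}_B\times\textup{id})_*\,R(f'^{((\nu))}\times f')_*\BQ_{A^{((\nu))}\times A}\,[2l(\nu)+2].
\]
Running the same argument that deduces \cite[Corollary 4.14]{Z} (= Proposition \ref{4.7}) from (\ref{0123})---perverse truncation commutes with the finite direct sum and with the $t$-exact maps $\iota^{((\nu))}_B\times\textup{id}$, and comparing the normalizing shifts, i.e.\ the defects of semismallness, accounts for the index shift $l(\nu)-n$---one obtains
\[
P^{h'_B\times f'}_kH^*(A^{[[n]]}\times A,\BQ)=\bigoplus_{\nu\vdash n}P^{f'^{((\nu))}\times f'}_{k+l(\nu)-n}H^*(A^{((\nu))}\times A,\BQ)\,[2l(\nu)-2n].
\]

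Finally, feed in the $(\nu)$-version of Proposition \ref{aaa}, which provides a canonical identification $P^{f'^{((\nu))}\times f'}_mH^*(A^{((\nu))}\times A,\BQ)=\bigoplus_{\sigma\in A[\gcd(\nu)]}P^{f'^{(\nu)}}_mH^*(A^{(\nu)},\BQ)$. Substituting this into the previous display and using $P^{f'^{(\nu)}}_\bullet=P^{f^{(\nu)}}_\bullet$ gives the asserted formula for $P^{h'\times f}_k$; letting $k\gg0$, so that $P_k$ is all of cohomology, recovers the direct sum decomposition, and this decomposition is canonical because each ingredient---(\ref{1003}), K\"unneth, Proposition \ref{aaa}---is.

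The one point that needs care, and that I regard as the main obstacle, is the equivariance bookkeeping hidden in Proposition \ref{aaa}: one must be sure the splitting of $H^*(A^{((\nu))}\times A,\BQ)$ into $\sigma$-components is $\mathfrak{S}_\nu$-stable (it is, because $\vec{v}'$ is constant on each block of $\nu$, so $\mathfrak{S}_\nu$ fixes every $\sigma\in A[\gcd(\nu)]$) and that the induced action on each component is exactly the permutation action whose invariants compute $H^*(A^{(\nu)},\BQ)$ together with its perverse filtration. Everything else---$t$-exactness of the closed embeddings, compatibility of perverse truncation with direct sums and with taking invariants, and the shift $l(\nu)-n$---is formal and already contained in the proof of Proposition \ref{4.7}.
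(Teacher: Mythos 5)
Your proposal is correct and follows essentially the same route as the paper: decompose $R(h'_B\times f')_*\BQ$ via (\ref{1003}) and K\"unneth, use $t$-exactness of the finite maps to commute perverse truncation through the decomposition (tracking the index shift $l(\nu)-n$ via the normalizing shifts), feed in Proposition \ref{aaa} for each $\nu$, and finally identify $P^{h'_B\times f'}_\bullet$ with $P^{h'\times f}_\bullet$ and $P^{f'^{(\nu)}}_\bullet$ with $P^{f^{(\nu)}}_\bullet$ via Propositions \ref{dim} and \ref{finite}. The only differences are cosmetic (you perform the reduction to $h'_B\times f'$ first rather than last, and phrase the shift bookkeeping by analogy with Proposition \ref{4.7} where the paper computes it via flatness and relative dimension), and your remark on the $\mathfrak{S}_\nu$-stability of the $\sigma$-decomposition correctly identifies the point the paper handles inside Proposition \ref{aaa}.
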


\begin{proof}
By (\ref{1003}) and K\"unneth formula,
\begin{equation}\label{1238}
\begin{split}
R(h'_{B}\times f')&_*\BQ_{A^{[[n]]}\times A}[2n]\\
=&\bigoplus_\nu \left(\iota_{B}^{((\nu))}\times \textup{id}\right)_*\left(R(f'^{\nu}_{0}\times f')_*\BQ_{A^\nu_0\times A}\right)^{\mathfrak{S}_\nu}[2l(\nu)]
\end{split}
\end{equation}

By Proposition \ref{aaa}, we have
\[
\begin{split}
H^*({A^{[[n]]}\times A},\BQ)[2n]=&\bigoplus_\nu H^*(A^\nu_0\times A,\BQ)^{\mathfrak{S}_\nu}[2l(\nu)].\\
=&\bigoplus_{\nu,\sigma} H^*(A^\nu,\BQ)^{\mathfrak{S}_\nu}[2l(\nu)]\\
=&\bigoplus_{\nu,\sigma} H^*(A^{(\nu)},\BQ)[2l(\nu)].
\end{split}
\]
To calculate the perverse filtration associated with $h'_B\times f'$, we apply the morphism of functors $\BH\circ{^\Fp\tau_{\le k}}\to \BH$ to (\ref{1238}). Since $h'_B\times f'$ is a flat morphism of relative dimension $n$, the left side calculates
\[
P_{k+n}^{h'_B\times f'}H^*(A^{[[n]]}\times A,\BQ)[2n].
\]
Since $f'^{\nu}_{0}\times f'$ is a flat morphism of relative dimension $l(\nu)$, the right side calculates
\[
\begin{split}
    &\bigoplus_{\nu}P_{k+l(\nu)}^{f'^\nu_0\times f'} H^*(A^\nu_0\times A,\BQ)^{\mathfrak{S}_\nu}[2l(\nu)]\\
    =&\bigoplus_{\nu}P_{k+l(\nu)}^{f'^{((\nu))}\times f'} H^*(A^{((\nu))}\times A,\BQ)[2l(\nu)]\\
    =&\bigoplus_{\nu,\sigma}P_{k+l(\nu)}^{f'^{(\nu)}} H^*(A^{(\nu)},\BQ)[2l(\nu)], \textup{ (by Proposition \ref{aaa}).}
\end{split}
\]
Therefore, we have the identification of perverse filtrations
\[
P_{k}^{h'_B\times f'}H^*(A^{[[n]]},\BQ)=\bigoplus_{\nu,\sigma}P_{k+l(\nu)-n}^{f'^{(\nu)}} H^*(A^{(\nu)},\BQ)[2l(\nu)-2n].
\]
By Proposition \ref{Kunneth} and Proposition \ref{dim}, the perverse filtration associated with $h'_B\times f'$ is identical to the one associated with $h'\times f$. By Proposition \ref{finite}, the finiteness of $g^{(\nu)}:B^{(\nu)}\to C^{(\nu)}$ implies that the perverse filtration associated with  $f'^{(\nu)}$ is identical with $f^{(\nu)}$. We have
\[
P_{k}^{h'\times f}H^*(A^{[[n]]}\times A,\BQ)=\bigoplus_{\nu,\sigma}P_{k+l(\nu)-n}^{f^{(\nu)}} H^*(A^{(\nu)},\BQ)[2l(\nu)-2n].
\]
\end{proof}

To state the cup product of generalized Kummer varieties, we introduce the following notations.
\begin{defn} \label{dfn}
Let $\nu$ be a partition of $n$, and $\alpha_\nu\in H^*(A^{(\nu)},\BQ)$ be a cohomology class. Let $\sigma\in A[n]$ be an $n$-torsion of $A$. Denote by $\alpha_{\nu,\sigma}$ the image of $\alpha_\nu$ in $H^*(A^{[[n]]}\times A,\BQ)$ via the decomposition
\[
H^*(A^{[[n]]}\times A,\BQ)=\bigoplus_{\nu,\sigma}H^*(A^{(\nu)},\BQ)[2l(\nu)-2n]
\]
if $\sigma\in A[\gcd(\nu)]$ and 0 otherwise.
\end{defn}

When $\nu$ runs through all partitions of $n$, $\alpha_\nu$ runs through a basis of $H^*(A^{(\nu)},\BQ)$ for each $\nu$, and $\sigma$ runs through $A[\gcd(\nu)]$, the classes $\alpha_{\nu,\sigma}$ form a basis of the cohomology group $H^*(A^{[[n]]}\times A,\BQ)$. It suffices to describe the cup product on this basis. By \cite[Proposition 4.1]{FG} and \cite[Corollary 6.15]{FTV} and \cite[Theorem 1.7]{N}, the cup product formula for generalized Kummer varieties is described as follows.

\begin{thm}   \label{cup}
Let $A$ be a smooth connected quasi-projective commutative group scheme of dimension 2. Let  $\alpha_{\nu}\in H^*(A^{(\nu)},\BQ)$ and $\beta_{\mu}\in H^*(A^{(\mu)},\BQ)$ be two cohomology classes. Following the notations in Corollary \ref{decomp}, suppose 
\begin{equation}\label{1240}
\alpha_\nu^{[n]}\cdot\beta_\mu^{[n]}=\sum_\lambda \gamma_\lambda^{[n]}
\end{equation}
is the cup product in $H^*(A^{[n]},\BQ)$. Then the product in $H^*(A^{[[n]]}\times A,\BQ)$ is calculated as
\[
\alpha_{\nu,\sigma}\cdot\beta_{\mu,\tau}=\sum_{\lambda}\gamma_{\lambda,\sigma\tau}.
\]
\end{thm}

\begin{thm} \label{main}
Let $f:A\to C$ be a proper surjective morphism from a connected quasi-projective
commutative group scheme $A$ of dimension 2 to a quasi-projective curve $C$. Then the perverse filtration associated with the induced morphism $h':A^{[[n]]}\to C^{((n))}$ is multiplicative.
\end{thm}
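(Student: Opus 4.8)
The plan is to reduce the multiplicativity statement for $h':A^{[[n]]}\to C^{((n))}$ to a perversity estimate on the cohomology of $A^{[[n]]}\times A$, and then deduce the latter from the cup product formula (Theorem~32 of \cite{Britze}), the description of the perverse filtration in Theorem~\ref{thm}, and the known multiplicativity for Hilbert schemes (Theorem~\ref{multi}). First I would observe that by Proposition~\ref{dim} it suffices to work with $h'_B:A^{[[n]]}\to B^{((n))}$, and more importantly that the Künneth-type description in Theorem~\ref{thm} expresses the perverse filtration on $H^*(A^{[[n]]}\times A,\BQ)$ in terms of the filtrations $P^{f^{(\nu)}}_\bullet$ on the factors $H^*(A^{(\nu)},\BQ)$, with a shift by $l(\nu)-n$. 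Comparing with Corollary~\ref{decomp}, this shift is exactly the one governing $\Fp^h$ on $H^*(A^{[n]},\BQ)$ via the classes $\alpha_\nu^{[n]}$; so $\Fp^{h'\times f}(\alpha_{\nu,\sigma}) = \Fp^{f^{(\nu)}}(\alpha_\nu) + n - l(\nu) = \Fp^{h}(\alpha_\nu^{[n]})$, independent of $\sigma$. In other words, the perversity of a basis class $\alpha_{\nu,\sigma}$ for $A^{[[n]]}\times A$ equals the perversity of the corresponding class $\alpha_\nu^{[n]}$ for $A^{[n]}$.

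Given this, the key step is: since the product in $H^*(A^{[[n]]}\times A,\BQ)$ is, by the cited theorem, governed by the product $\alpha_\nu^{[n]}\cdot\beta_\mu^{[n]} = \sum_\lambda \gamma_\lambda^{[n]}$ in $H^*(A^{[n]},\BQ)$ (merely decorated with the group law $\sigma,\tau\mapsto\sigma\tau$ on the torsion labels, which does not affect perversities), the estimate
\[
\Fp^{h'\times f}(\alpha_{\nu,\sigma}\cdot\beta_{\mu,\tau}) = \max_\lambda \Fp^{h'\times f}(\gamma_{\lambda,\sigma\tau}) = \max_\lambda \Fp^{h}(\gamma_\lambda^{[n]}) = \Fp^{h}(\alpha_\nu^{[n]}\cdot\beta_\mu^{[n]})
\]
holds, and the multiplicativity of $P^h_\bullet$ from Theorem~\ref{multi} (applied to $S=A$, which has trivial canonical bundle and, by Proposition~\ref{str}, admits the required surjective restriction from a smooth compactification — one checks this in each of the three cases) gives $\Fp^h(\alpha_\nu^{[n]}\cdot\beta_\mu^{[n]})\le \Fp^h(\alpha_\nu^{[n]})+\Fp^h(\beta_\mu^{[n]})$. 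Chaining these equalities and inequalities yields $\Fp^{h'\times f}(\alpha_{\nu,\sigma}\cdot\beta_{\mu,\tau})\le \Fp^{h'\times f}(\alpha_{\nu,\sigma})+\Fp^{h'\times f}(\beta_{\mu,\tau})$, i.e. the perverse filtration associated with $h'\times f$ on $H^*(A^{[[n]]}\times A,\BQ)$ is multiplicative.

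Finally I would descend from $A^{[[n]]}\times A$ to $A^{[[n]]}$. The projection $p:A^{[[n]]}\times A\to A^{[[n]]}$ is proper and smooth, and $A^{[[n]]}$ embeds in $A^{[[n]]}\times A$ (e.g. via any section, or via the graph of a constant map), with pullback $p^*:H^*(A^{[[n]]},\BQ)\to H^*(A^{[[n]]}\times A,\BQ)$ strictly compatible with the perverse filtrations by Proposition~\ref{Kunneth} (the $A$-factor contributes perversity $0$ on $H^0(A)$) and a splitting injection onto the $H^0(A)$-component. Since $p^*$ is a ring homomorphism and a filtered direct summand, multiplicativity upstairs restricts to multiplicativity of $P^{h'}_\bullet$ on $H^*(A^{[[n]]},\BQ)$, which is the assertion of Theorem~\ref{main}.

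I expect the main obstacle to be bookkeeping rather than a conceptual difficulty: one must check carefully that the torsion-label group law in the cup product formula genuinely does not interact with the perverse degree (it acts by permuting the canonical summands indexed by $\sigma\in A[\gcd(\nu)]$, each isomorphic as a filtered space to $H^*(A^{(\nu)},\BQ)$, so it cannot change $\Fp$), and that the hypotheses of Theorem~\ref{multi} are met for $S=A$ in all three cases of Proposition~\ref{str} — in particular verifying the surjectivity $H^*(\bar A)\to H^*(A)$ for $A=E\times\BC$ and $A=(E\times\BC^*)/\Gamma$, where a convenient compactification must be exhibited. The descent step also requires making precise that $p^*$ identifies $P^{h'}_k H^*(A^{[[n]]},\BQ)$ with the $\sigma$-summands carrying $H^0(A)$, which follows formally from Theorem~\ref{thm} with $n$ replaced appropriately but should be spelled out.
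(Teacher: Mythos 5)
Your proposal follows essentially the same route as the paper's own proof: the key identity $\Fp^{h'\times f}(\alpha_{\nu,\sigma})=\Fp^{f^{(\nu)}}(\alpha_\nu)+n-l(\nu)=\Fp^{h}(\alpha_\nu^{[n]})$ from Theorem \ref{thm} and Corollary \ref{decomp}, transfer of the product via the Britze cup product formula to $H^*(A^{[n]},\BQ)$ where Theorem \ref{multi} gives multiplicativity, and descent from $A^{[[n]]}\times A$ to $A^{[[n]]}$ via $\alpha\mapsto\alpha\boxtimes 1$ and Proposition \ref{Kunneth}. The points you flag as needing verification (the hypotheses of Theorem \ref{multi} for $S=A$ in the three cases of Proposition \ref{str}, and the fact that the torsion labels do not affect perversity) are indeed the right things to check and are consistent with what the paper uses.
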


\begin{proof}
Let $\alpha\in H^*(A^{(\nu)},\BQ)$. Then by Proposition \ref{4.7} and Theorem \ref{thm}, we have 
\begin{equation}\label{1241}
\Fp^{h'\times f}(\alpha_{\nu,\sigma})=\Fp^{f^{(\nu)}}(\alpha_\nu)+n-l(\nu)=\Fp^h(\alpha_{\nu}^{[n]})
\end{equation}
for any $\sigma\in A[\gcd(\nu)]$. By Theorem \ref{multi}, the perverse filtration associated with $\pi$ is multiplicative 
\[
\Fp^h(\alpha_{\nu}^{[n]}\cdot\beta_{\mu}^{[n]})\le \Fp^h(\alpha_{\nu}^{[n]}) +\Fp^h(\beta_{\mu}^{[n]}).
\]
By (\ref{1240}) and Corollary \ref{decomp}, 
\[
\Fp^h(\gamma_{\lambda}^{[n]})\le \Fp^h(\alpha_{\nu}^{[n]}) +\Fp^h(\beta_{\mu}^{[n]})
\]
holds for each class $\gamma_{\lambda}^{[n]}$ occurring in (\ref{1240}). By (\ref{1241}), 
\[
\Fp^{h'\times f}(\gamma_{\lambda,\sigma\tau})\le \Fp^{h'\times f}(\alpha_{\nu,\sigma})+ \Fp^{h'\times f}(\beta_{\mu,\tau})
\]
holds for each non-zero $\gamma_{\lambda,\sigma\tau}$. So
\begin{equation} \label{1242}
    \Fp^{h'\times f}(\alpha_{\nu,\sigma}\cdot \beta_{\mu,\tau})\le \Fp^{h'\times f}(\alpha_{\nu,\sigma})+ \Fp^{h'\times f}(\beta_{\mu,\tau}).
\end{equation}
Therefore, the perverse filtration associated with $h'\times f$ is multiplicative.  The flatness of $f:A\to C$ implies $\Fp^f(1)=0$. Then by Proposition \ref{Kunneth} and (\ref{1242}),
\[
\begin{split}
   \Fp^{h'}(\alpha\beta)=\Fp^{h'\times f}(\alpha\beta\boxtimes 1)=\Fp^{h'\times f}(\alpha\boxtimes 1 \cdot \beta\boxtimes 1)\\
   \le \Fp^{h'\times f}(\alpha\boxtimes 1)+\Fp^{h'\times f}(\beta\boxtimes 1)=\Fp^{h'}(\alpha)+\Fp^{h'}(\beta).
\end{split}
\]
\end{proof}

\begin{cor}
Let $f:A\to C$ be a morphism satisfying the condition of Theorem \ref{main}. Then the perverse filtration associated with the induced morphism $h':A^{[[n]]}\to C^{((n))}$ admits a natural strongly multiplicative splitting in the sense of (\ref{001}).
\end{cor}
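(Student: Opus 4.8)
The plan is to upgrade the multiplicativity statement of Theorem \ref{main} to a strongly multiplicative splitting by exhibiting an explicit candidate splitting built from the decomposition in Theorem \ref{thm}, and then checking the two required properties: that it splits the perverse filtration, and that cup product is graded for the induced grading. First I would recall from Theorem \ref{thm} the canonical decomposition
\[
H^*(A^{[[n]]}\times A,\BQ)=\bigoplus_\nu\bigoplus_{\sigma\in A[\gcd(\nu)]}H^*(A^{(\nu)},\BQ)[2l(\nu)-2n],
\]
together with the fact that the $f^{(\nu)}$-perverse filtration on each factor $H^*(A^{(\nu)},\BQ)$ itself admits a strongly multiplicative splitting. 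Indeed, by Proposition \ref{4.3} the fibration $f':A\to B$ (equivalently, after Proposition \ref{finite}, $f:A\to C$) admits a strongly multiplicative splitting $G_\bullet H^*(A,\BQ)$; taking $n$-fold external products and $\mathfrak{S}_\nu$-invariants via Proposition \ref{Kunneth} and (\ref{989}) produces a strongly multiplicative splitting $G_\bullet^{f^{(\nu)}}H^*(A^{(\nu)},\BQ)$ of the $f^{(\nu)}$-perverse filtration.

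Next I would transport these splittings through the decomposition. For a pure class $\alpha_\nu\in G_i^{f^{(\nu)}}H^*(A^{(\nu)},\BQ)$ and $\sigma\in A[\gcd(\nu)]$, define the corresponding class $\alpha_{\nu,\sigma}\in H^*(A^{[[n]]}\times A,\BQ)$ as in Definition \ref{dfn}, and declare it to be pure of perversity $i+n-l(\nu)$; set $G_jH^*(A^{[[n]]}\times A,\BQ)$ to be the span of all such $\alpha_{\nu,\sigma}$ with $i+n-l(\nu)=j$. By the perversity formula (\ref{1241}) in the proof of Theorem \ref{main} (which reads $\Fp^{h'\times f}(\alpha_{\nu,\sigma})=\Fp^{f^{(\nu)}}(\alpha_\nu)+n-l(\nu)$), and since the $G^{f^{(\nu)}}_\bullet$ are adapted to the $f^{(\nu)}$-perverse filtrations, the resulting $G_\bullet$ is a basis-adapted splitting of the $(h'\times f)$-perverse filtration on $H^*(A^{[[n]]}\times A,\BQ)$. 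It then remains to restrict to the factor $A^{[[n]]}$: intersecting with $H^*(A^{[[n]]},\BQ)\boxtimes 1$ (using $\Fp^f(1)=0$ and Proposition \ref{Kunneth} exactly as at the end of the proof of Theorem \ref{main}) yields a splitting $G_\bullet H^*(A^{[[n]]},\BQ)$ of the $h'$-perverse filtration.

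The main step — and the only real obstacle — is verifying the grading property (\ref{001}) for $G_\bullet$, i.e. that $G_iH^*(A^{[[n]]}\times A)\cup G_jH^*(A^{[[n]]}\times A)\subset G_{i+j}H^*(A^{[[n]]}\times A)$. For this I would run the argument of the proof of Theorem \ref{main} keeping track of \emph{exact} perversities instead of inequalities. Write $\alpha_{\nu,\sigma}\cdot\beta_{\mu,\tau}=\sum_\lambda\gamma_{\lambda,\sigma\tau}$ via \cite[Theorem 32]{Britze}, where $\alpha_\nu^{[n]}\cdot\beta_\mu^{[n]}=\sum_\lambda\gamma_\lambda^{[n]}$ in $H^*(A^{[n]},\BQ)$. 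The point is that Theorem \ref{multi} gives multiplicativity for $h:S^{[n]}\to C^{(n)}$, but in the present case ($S=A$, trivial canonical bundle, $\bar S$ compactification with surjective restriction — all of which hold by Proposition \ref{str}) the fibration $f$ in fact carries the strongly multiplicative splitting of Proposition \ref{4.3}; so the Hilbert scheme $h$ inherits a strongly multiplicative splitting. Concretely, one checks that the proof of Theorem \ref{multi} (the cup product formula \cite[Proposition 4.10]{Z}, the description Proposition \ref{4.7}, and the diagonal estimation Proposition \ref{diag}) upgrades verbatim to the statement that, for pure classes, the perversity of each $\gamma_\lambda^{[n]}$ occurring equals $\Fp^h(\alpha_\nu^{[n]})+\Fp^h(\beta_\mu^{[n]})$ — this uses the remark after Proposition \ref{4.5} that $\delta_*$ raises perversity by \emph{exactly} $2$ on pure classes. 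Feeding this equality back through (\ref{1241}) and Definition \ref{dfn}, every nonzero $\gamma_{\lambda,\sigma\tau}$ is pure of perversity $\Fp^{h'\times f}(\alpha_{\nu,\sigma})+\Fp^{h'\times f}(\beta_{\mu,\tau})$, which is precisely (\ref{001}) for $h'\times f$; restricting to the $A^{[[n]]}$ factor as above finishes the proof. The delicate bookkeeping is ensuring the diagonal/graph estimates of Section 2 are sharp on pure classes, so I would first isolate and record that sharpening as a lemma before assembling the final argument.
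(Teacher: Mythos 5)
Your proposal follows the paper's own route: build the splitting from Proposition \ref{4.3}, induce it on $f^{(\nu)}$ and on $h:A^{[n]}\to C^{(n)}$, transport it to $h'\times f$ via Proposition \ref{aaa} and Theorem \ref{thm} using the perversity formula (\ref{1241}), and observe that the inequalities in the proof of Theorem \ref{main} become equalities on pure classes. The only difference is cosmetic: you propose to re-derive the strong multiplicativity of the Hilbert-scheme splitting by sharpening the graph/diagonal estimates of Section 2 (as in the remark after Proposition \ref{4.5}), where the paper simply cites \cite[Proposition 2.8]{SZ}.
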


\begin{proof}
By Proposition \ref{4.3}, the perverse filtration associated with $f:A\to C$ admits a strongly multiplicative splitting. Such a splitting induces a splitting of the perverse filtration associated with $f^{(\nu)}:A^{(\nu)}\to C^{(\nu)}$ for any partition $\nu$, and a splitting of the perverse filtration associated with $h:A^{[n]}\to C^{(n)}$; see \cite[Section 2.2]{SZ}. Now Proposition \ref{aaa} and Theorem \ref{thm} produce splittings of the perverse filtrations associated with $f'^{(\nu)}\times f':A^{(\nu)}_0\times A\to B^{(\nu)}_0\times B$ and $h'\times f:A^{[[n]]}\times A\to C^{((n))}\times C$, respectively. It follows directly from the construction that if $\alpha_\nu\in H^*(A^{(n)},\BQ)$ is a pure class with respect to the splitting, then $\alpha_\nu^{[n]}$ and $\alpha_{\nu,\sigma}$ are both pure in the corresponding splittings, and the equation (\ref{1241}) still holds. By \cite[Proposition 2.8]{SZ}, the perverse filtration associated with $h$ is strongly multiplicative, the inequalities in the proof of Theorem \ref{main} are equations for pure classes $\alpha_\nu^{[n]}$, $\beta_\nu^{[n]}$, $\alpha_{\nu,\sigma}$, and $\beta_{\nu,\tau}$. Therefore, the splitting we constructed is strongly multiplicative. 
\end{proof}

\end{document}